\newtheorem{theorem}{Theorem}
\newtheorem*{Theorem}{Theorem}
\newtheorem{lemma}[theorem]{Lemma}
\theoremstyle{definition}
\newtheorem{definition}[theorem]{Definition}
\newtheorem{remark}[theorem]{Remark}
\numberwithin{equation}{section}
\numberwithin{theorem}{section}
\newcommand{\C}{\mathbb{C}}
\renewcommand{\epsilon}{\varepsilon}
\renewcommand{\phi}{\varphi}
\newcommand{\R}{\mathbb{R}}
\newcommand{\T}{\mathbb{T}}
\newcommand{\Z}{\mathbb{Z}}
\begin{document}
\title[]{Hardy inequalities for antisymmetric functions}

\author[]{By Shubham Gupta}
\address{Department of Mathematics, Imperial College London, 180 Queen’s Gate, London,
SW7 2AZ, United Kingdom.} \email{s.gupta19@imperial.ac.uk}

\keywords{Hardy inequality, antisymmetric functions, sharp constant, expansion of \newline \hspace{19pt} the square. }
\subjclass[2020]{39B62, 26D10, 35A23.}

\begin{abstract}  
We study Hardy inequalities for antisymmetric functions in three different settings: Euclidean space, torus and the integer lattice. In particular, we show that under the antisymmetric condition the sharp constant in Hardy inequality increases substantially and grows as $d^4$ as $d \rightarrow \infty$ in all cases. As a side product, we prove Hardy inequality on a domain whose boundary forms a corner at the point of singularity $x=0$.
\end{abstract}

\maketitle

\section{Introduction} 
\emph{Hardy inequality} plays an important role in various branches of analysis: theory of PDEs, spectral theory, mathematical physics,  spectral geometry, and many more. It states that
\begin{equation}\label{1.1}
    \frac{(d-2)^2}{4}\int_{\R^d} \frac{|u(x)|^2}{|x|^2} dx \leq \int_{\R^d} |\nabla u(x)|^2 dx,
\end{equation}
for $u \in C_0^\infty(\R^d)$ if $d>2$ and $u \in C_0^\infty(\R^d \setminus \{0\})$ if $d < 2$. The constant in \eqref{1.1} is the best possible and \eqref{1.1} fails to hold in the critical dimension $d=2$. There are various ways to obtain Hardy inequality in dimension two: by weakening the singularity, by introducing a magnetic field in the gradient, and by restricting to a smaller class of functions. In this paper, we undertake the last approach and restrict ourselves to functions satisfying the following \emph{antisymmetric condition}:
\begin{equation}\label{1.2}
    u(\dots,x_j, \dots, x_i, \dots) = -u(\dots,x_i, \dots, x_j, \dots),
\end{equation}
for all $1 \leq i \neq j \leq d$.

It is well known that \eqref{1.1} is optimized by functions depending on $|x|$. It can be checked that any non-zero function depending on $|x|$ does not satisfy \eqref{1.2}. Thus, it is expected that the constant in \eqref{1.1} should improve when restricted to functions satisfying the antisymmetric condition. In \cite{hoffmann} Hoffmann-Ostenhof and Laptev confirmed this and proved

\begin{Theorem}[Hoffmann-Ostenhof, Laptev \cite{hoffmann}]
Let $u \in C_0^\infty(\R^d)$ satisying the antisymmetric condition \eqref{1.2}. Then for $d \geq 2$  we have
\begin{equation}\label{1.3}
    \frac{(d^2-2)^2}{4} \int_{\R^d} \frac{|u(x)|^2}{|x|^2} dx \leq \int_{\R^d} |\nabla u(x)|^2 dx.
\end{equation}
Moreover the constant is sharp, that is, \eqref{1.3} fails to hold for a strictly bigger constant.
\end{Theorem}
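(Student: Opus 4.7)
My plan is to prove both the inequality and its sharpness via spherical-harmonic decomposition, exploiting the minimal angular frequency forced by antisymmetry. Any polynomial on $\R^d$ that is antisymmetric under coordinate transpositions vanishes on each diagonal $\{x_i=x_j\}$, and is therefore divisible by $x_i-x_j$ for every pair $i<j$. Hence it is divisible by the Vandermonde determinant $V(x):=\prod_{i<j}(x_i-x_j)$, a homogeneous harmonic polynomial of degree $\ell_0:=d(d-1)/2$. Consequently every antisymmetric spherical harmonic on $\Sph^{d-1}$ has degree at least $\ell_0$.

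\textbf{Main computation.} In polar coordinates write $u(x)=\sum_{\ell,n}f_{\ell,n}(r)Y_{\ell,n}(\omega)$, where $\{Y_{\ell,n}\}$ is an $L^2(\Sph^{d-1})$-orthonormal basis of spherical harmonics of degree $\ell$ and $r=|x|$, $\omega=x/|x|$. Antisymmetry of $u$ forces, for each fixed $\ell$, the coefficient vector $(f_{\ell,n})_n$ to lie in the sign-isotypic component of the natural $S_d$-action on degree-$\ell$ harmonics, which by the previous paragraph is trivial when $\ell<\ell_0$. The standard identities
\begin{align*}
\int_{\R^d}|\nabla u|^2\,dx &= \sum_{\ell\geq\ell_0,\,n}\int_0^\infty\!\bigl(|f_{\ell,n}'|^2+\ell(\ell+d-2)\,r^{-2}|f_{\ell,n}|^2\bigr)\,r^{d-1}\,dr,\\
\int_{\R^d}\frac{|u(x)|^2}{|x|^2}\,dx &= \sum_{\ell\geq\ell_0,\,n}\int_0^\infty |f_{\ell,n}|^2\,r^{d-3}\,dr,
\end{align*}
combined with the one-dimensional weighted Hardy inequality $\int_0^\infty|f'|^2\,r^{d-1}\,dr\geq\tfrac{(d-2)^2}{4}\int_0^\infty|f|^2\,r^{d-3}\,dr$, then yield
\[\int_{\R^d}|\nabla u|^2\,dx\geq\Bigl(\tfrac{(d-2)^2}{4}+\ell_0(\ell_0+d-2)\Bigr)\int_{\R^d}\frac{|u|^2}{|x|^2}\,dx,\]
and an elementary algebraic check with $\ell_0=d(d-1)/2$ collapses the bracketed constant to $(d^2-2)^2/4$.

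\textbf{Sharpness and main obstacle.} For sharpness I would test on antisymmetric trial functions $u_\epsilon(x)=V(x)|x|^{-\alpha}\eta_\epsilon(|x|)$ with $\alpha=(d-2)/2+\ell_0$ and $\eta_\epsilon$ a standard logarithmic cut-off saturating one-dimensional Hardy in $L^2(r^{d-3}\,dr)$ as $\epsilon\to 0$. Because $V$ is homogeneous of degree exactly $\ell_0$, each $u_\epsilon$ has only a single nonzero angular mode, so both inequalities used in the main computation become equalities in the limit, giving a matching upper bound on the constant. The main obstacle is really just the algebraic claim that antisymmetric harmonic polynomials have degree at least $\ell_0$; once granted, the analysis is routine. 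A minor technicality is justifying the spherical-harmonic expansion for $u\in C_0^\infty(\R^d)$ and verifying that each $f_{\ell,n}$ is admissible in the one-dimensional Hardy inequality, both of which follow from the classical fact that $f_{\ell,n}(r)=O(r^\ell)$ as $r\to 0$.
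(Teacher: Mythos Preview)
Your argument is correct and complete (the identity $(d-2)/2+\ell_0=(d^2-2)/2$ with $\ell_0=d(d-1)/2$ is the right check, and the Vandermonde $V$ is indeed harmonic since $\Delta V$ would be antisymmetric of degree $\ell_0-2<\ell_0$, hence zero). However, what you have reproduced is precisely the original Hoffmann--Ostenhof--Laptev proof, which the paper explicitly sets out to \emph{avoid}. The paper's own proof proceeds by the expansion-of-the-square method: on the fundamental chamber $\Omega=\{x_1<\cdots<x_d\}$ it tests against the regularised vector field
\[
F_\epsilon(x)=\alpha\,\frac{x}{|x|^2+\epsilon^2}+\sum_{i<j}\frac{e_i-e_j}{(x_j-x_i)+\epsilon^2},
\]
computes $\operatorname{div}F_\epsilon-|F_\epsilon|^2$ explicitly, and shows that all the cross terms involving the singular $1/(x_j-x_i)$ pieces either combine to a favourable sign or vanish as $\epsilon\to 0$ (using $|u|\le c\,|x_i-x_j|$ from antisymmetry). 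Optimising over $\alpha$ then yields $(d^2-2)^2/4$.

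The trade-off is clear. Your spherical-harmonic route is shorter and yields sharpness in the same stroke (the paper's direct method only gives the lower bound and inherits sharpness from \cite{hoffmann}). But it relies on the rotational symmetry of $\R^d$ and the explicit spectral decomposition of $L^2(\Sph^{d-1})$, which are unavailable on the torus $Q_d$ or the lattice $\Z^d$. The paper's vector-field approach is heavier computationally but transfers essentially verbatim---replacing $x_k/|x|^2$ by $\sin x_k/\omega(x)$ and $1/(x_j-x_i)$ by $\cos(x_k/2)/(\sin(x_j/2)-\sin(x_i/2))$---to prove the torus and discrete results that are the paper's real target.
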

First, we note that $d=2$ is allowed in the result. Secondly under the condition \eqref{1.2} the optimal constant improves by a significant amount. It grows at the rate of $d^4$ as $d \rightarrow \infty$ as compared to $d^2$ growth in the standard Hardy inequality \eqref{1.1}. The motivation for restricting to class of antisymmetric functions comes from mathematical physics, where such assumptions appears very naturally in the study of fermionic particles. For other kinds of Hardy-type inequalities for fermionic particles we refer the reader to \cite{frank, hoffmann1, hoffmann2}. 

In the proof, the authors used a decomposition of $L^2(\mathbb{S}^{d-1})$ into spherical harmonics and computed the smallest possible degree of an  antisymmetric spherical harmonic which appears in the final constant. A part of this paper is concerned with  giving a direct proof of \eqref{1.3} which does not use spherical harmonic decomposition. One of our motivations for finding such a proof is to extend the above theorem to spaces such as the torus and integer lattices, which do not enjoy such decompositions.

The other part of the paper deals with a discrete analogue of the Hardy inequality \eqref{1.1} on the integer lattices $\Z^d$. Let $u \in C_c(\Z^d)$, the space of finitely supported functions on $\Z^d$. The \emph{discrete Hardy inequality} then reads as:
\begin{equation}\label{1.4}
    C_L(d) \sum_{n \in \Z^d} \frac{|u(n)|^2}{|n|^2} \leq \sum_{n \in \Z^d} |Du(n)|^2,
\end{equation}
for some positive constant $C_L(d)$. Here $Du(n) := (D_1u(n), \dots, D_du(n))$ and $D_j u(n) := u(n)-u(n-e_j)$ with $e_j$ being the $j^{th}$ standard basis of $\R^d$.

The sharp constant in the discrete Hardy inequality \eqref{1.4} is only known for $d=1$. This appeared for the first time in G.H. Hardy's proof of Hilbert's theorem \cite{hardy}. In higher dimensions, it was proved and used by Rozenblum and Solomyak in the study of discrete Schr\"odinger operators \cite{solomyak1, solomyak2}. However they proved the inequality without any explicit estimates on the constant. In 2016, an explicit constant was computed by Kapitanski and Laptev \cite{laptev} but their constant did not scale with the dimension $d$ as $d \rightarrow \infty$. Later in 2018 Keller, Pinchover and Pogorzelski \cite{keller} developed a framework to study Hardy type inequalities on general infinite graphs, although their theory does not yield the classical result \eqref{1.4} when applied to integer lattices (except for $d=1$, where they get an \emph{optimal} improvement of \eqref{1.4}, see \cite{keller1, david}). Recently in 2022 \cite{gupta}, we computed the asymptotic behaviour of the sharp constant $C_L(d)$ as $d \rightarrow \infty$. More precisely, we proved that $C_L(d) \sim d$ as $d \rightarrow \infty$, that is, there exists positive constants $c_1, c_2, N$ such that $c_1 d \leq C_L(d) \leq c_2 d$ for all $d \geq N$. The problem of finding the exact value of the sharp constant and an explicit expression of the optimizers (or atleast their nature) of \eqref{1.4} is widely open.

In this paper, we are interested in finding the asymptotic behaviour of the sharp constant in \eqref{1.4} when restricted to functions satisfying the antisymmetric condition \eqref{1.2}; in particular, understanding whether this additional restriction improves the growth at infinity. Our first main result in this direction is the following:
\begin{theorem}\label{thm1.1}
Let $u \in C_c(\Z^d)$ satisfying the anitsymmetric condition \eqref{1.2}. Then for $d \geq 2$ we have 
\begin{equation}\label{1.5}
    C_L^{as}(d)\sum_{n \in \Z^d} \frac{|u(n)|^2}{|n|^2} \leq \sum_{n \in \Z^d} |Du(n)|^2,
\end{equation}
where $C_L^{as}(d)$ is given by 
\begin{equation}\label{1.6}
    C_L^{as}(d) := \frac{(d^2-2)^2}{4}\Bigg(1+ \frac{c_d}{C_P^{as}(d)}\Bigg)^{-1},
\end{equation}
and $c_d := \frac{1}{48d} (11 d^4 - 38 d^2 + 12d +12)$ and $C_P^{as}(d) := \frac{N}{3}(N-1)(2N-1) + (3-(-1)^d)N^2/2$ with $N := \lfloor d/2 \rfloor$.
\end{theorem}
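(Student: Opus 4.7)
The plan is to break the proof into a combinatorial reduction based on the antisymmetric support constraint, followed by a shifted discrete Hardy inequality proved via the ground-state (expansion-of-the-square) method.

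\textbf{Step 1: reduction to a shifted inequality.} Since an antisymmetric $u$ vanishes whenever two coordinates of $n$ coincide, its support lies in the set of integer points with pairwise distinct entries. A direct combinatorial check (splitting into the cases $d$ even and $d$ odd) shows that on this set the minimum of $|n|^2$ is attained by a block of $d$ consecutive integers centered as close to the origin as possible, and the resulting value is exactly $C_P^{as}(d)$. Hence, on $\supp(u)$,
$$\frac{1}{|n|^2 + c_d} = \frac{1}{|n|^2}\cdot\frac{1}{1 + c_d/|n|^2} \;\geq\; \frac{1}{|n|^2\,(1 + c_d/C_P^{as}(d))},$$
so the theorem follows at once from the shifted inequality
$$\sum_{n \in \Z^d} |Du(n)|^2 \;\geq\; \frac{(d^2-2)^2}{4}\sum_{n \in \Z^d}\frac{|u(n)|^2}{|n|^2 + c_d}, \qquad (\ast)$$
valid for all antisymmetric $u \in C_c(\Z^d)$.

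\textbf{Step 2: discrete ground-state substitution.} Motivated by the continuous antisymmetric Hardy optimizer (for which a direct, spherical-harmonics-free proof is another stated goal of the paper), namely $\phi_c(x) = V(x)|x|^{-(d^2-2)/2}$ with $V(x) = \prod_{i<j}(x_i-x_j)$ the Vandermonde determinant --- an antisymmetric harmonic polynomial of minimal degree $d(d-1)/2$ --- I would take as the discrete supersolution
$$\phi(n) := \frac{V(n)}{(|n|^2 + c_d)^{(d^2-2)/4}}.$$
This $\phi$ is antisymmetric and of constant sign on the Weyl chamber $\Omega = \{n : n_1 > \dots > n_d\}$, and together with $u$ vanishes on $\partial\Omega$. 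The algebraic identity
$$\sum_n |D_j u(n)|^2 = \sum_n \phi(n)\phi(n-e_j)\,\bigl|D_j(u/\phi)(n)\bigr|^2 \;+\; \sum_n \frac{-\Delta^{\mathrm{disc}}_j \phi(n)}{\phi(n)}\,|u(n)|^2,$$
where $-\Delta^{\mathrm{disc}}_j\phi(n) = 2\phi(n) - \phi(n+e_j) - \phi(n-e_j)$, then holds coordinatewise (the first sum being non-negative once one restricts to $\Omega$, where $\phi\phi(\cdot-e_j) \geq 0$). Summing over $j$, inequality $(\ast)$ reduces to the pointwise lower bound
$$\frac{-\Delta^{\mathrm{disc}}\phi(n)}{\phi(n)} \;\geq\; \frac{(d^2-2)^2}{4\,(|n|^2 + c_d)} \qquad \text{on } \supp(u).$$

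\textbf{Step 3: the pointwise bound, and main obstacle.} Writing $\phi = V \cdot g$ with $g(n) = (|n|^2 + c_d)^{-(d^2-2)/4}$ and expanding $-\Delta^{\mathrm{disc}}(Vg)$ via a discrete Leibniz rule splits $-\Delta^{\mathrm{disc}}\phi/\phi$ into a ``continuous'' principal part, which via $\Delta V = 0$ and Euler's homogeneity identity reproduces exactly $(d^2-2)^2/(4(|n|^2+c_d))$, plus finite-difference remainders of third and fourth order in the unit spacing, arising from the non-vanishing higher differences of $V$ and $g$. The hard part is the algebra: the mixed differences of $V$ take the form $\sum_{i<j}\prod_{k\neq i,j}(n_i - n_k)(n_j - n_k)$, and combining these with the fourth-order Taylor tails of the radial factor and reorganizing the result as a single manageable fraction is the delicate combinatorial step. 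The specific rational shift $c_d = (11 d^4 - 38 d^2 + 12 d + 12)/(48\,d)$ should emerge as the optimal choice making this combined remainder non-negative for every $d \geq 2$ simultaneously, rather than only asymptotically. Producing this shift and verifying non-negativity of the resulting remainder uniformly in $n$ is, in my expectation, the main obstacle of the proof.
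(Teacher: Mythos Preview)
Your Step~1 reduction is correct: on the support of an antisymmetric $u$ one has $|n|^2 \geq C_P^{as}(d)$, so the theorem would indeed follow from your shifted inequality $(\ast)$. The gap is in Steps~2--3, and it is not merely an incomplete calculation but a wrong expectation about where the constant $c_d$ comes from.

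The paper never proves $(\ast)$ or any discrete pointwise supersolution bound. It passes to the torus by Fourier transform (Lemma~\ref{lem4.1}): for antisymmetric $u$ there is an antisymmetric periodic $\psi$ with $\sum_n |u(n)|^2/|n|^2 = \int_{Q_d}|\nabla\psi|^2$ and $\sum_n|Du(n)|^2 = 4\int_{Q_d}|\Delta\psi|^2\omega$, where $\omega = \sum_j\sin^2(x_j/2)$. One Cauchy--Schwarz step then reduces Theorem~\ref{thm1.1} to the torus Hardy inequality $C_{\T}^{as}(d)\int|\psi|^2/\omega \leq \int|\nabla\psi|^2$ (Theorem~\ref{thm1.5}). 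The constant $c_d$ appears inside the proof of \emph{that} inequality: an expansion of the square with the vector field $\alpha(\sin x_1,\dots,\sin x_d)/\omega + \tfrac12\sum_{i<j}(\cos(x_i/2)e_i-\cos(x_j/2)e_j)/(\sin(x_j/2)-\sin(x_i/2))$, followed by several deliberately non-sharp simplifications (H\"older on $(\sum\sin(x_i/2))^2$, the crude bound $d\sum_i\sin^4(x_i/2) \geq \omega^2$), produces $\int|\nabla\psi|^2 \geq \tfrac{(d^2-2)^2}{16}\int|\psi|^2/\omega - c_d\int|\psi|^2$, and the antisymmetric Poincar\'e constant $C_P^{as}(d)$ then absorbs the negative term. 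So $c_d$ is an artifact of that particular trigonometric computation and its specific losses; nothing ties it to the discrete remainders generated by $\Delta^{\mathrm{disc}}V \neq 0$ and the Taylor tails of $(|n|^2+c_d)^{-(d^2-2)/4}$. Your hope that this exact $c_d$ ``should emerge as the optimal choice'' making the pointwise bound $-\Delta^{\mathrm{disc}}\phi/\phi \geq (d^2-2)^2/(4(|n|^2+c_d))$ hold is unsupported and almost certainly false. (There is also a secondary technical issue you gloss over: the ground-state identity requires $\phi \neq 0$, but your $\phi$ vanishes on every chamber wall, so $u/\phi$ and $-\Delta^{\mathrm{disc}}\phi/\phi$ are undefined there; restricting the sum to $\Omega$ introduces boundary terms that must be controlled.)
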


\begin{remark}
We remark that similar to the continuous setting, the discrete Hardy inequality \eqref{1.4} does not hold for $d=2$. This can be seen by choosing functions which depends only on $\|n\| :=$ max $|n_i|$. However, Theorem \ref{thm1.1} tells us that it holds under the antisymmetric condition.

The constants $c_d$ and $C_P^{as}$ are positive and  $C_L^{as}(d)\sim d^4$ as $d \rightarrow \infty$. This shows that the sharp constant in the discrete Hardy inequality improves significantly under the antisymmetric condition (the constant in the standard inequality \eqref{1.4} grows linearly in dimension \cite{gupta}).  
\end{remark}
In the next theorem we prove that the constant $C_L^{as}(d)$ in \eqref{1.5} is asymptotically sharp as $d \rightarrow \infty$.
\begin{theorem}\label{thm1.3}
Let $u \in C_c(\Z^d)$ satisfying the antisymmetric condition \eqref{1.2}. Let $C_L^{as}(d)$ be the sharp constant in 
\begin{equation}\label{1.7}
    C_{L}^{as}(d)\sum_{n \in \Z^d} \frac{|u(n)|^2}{|n|^2} \leq \sum_{n \in \Z^d} |Du(n)|^2.
\end{equation}
Then $C_L^{as}(d) \sim d^4$ as $d \rightarrow \infty$. In other words, there exists positive constants $c_1, c_2$ and $N$ (all independent of $d$) such that $c_1d^4 \leq C_L^{as}(d) \leq c_2 d^4$ for all $d \geq N$.
\end{theorem}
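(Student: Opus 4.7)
The plan is to prove the two bounds separately. The lower bound follows immediately from Theorem \ref{thm1.1} by asymptotic expansion of the explicit constants; the upper bound is obtained by discretizing a near-optimal continuous antisymmetric test function and invoking the Hoffmann-Ostenhof/Laptev sharp constant $(d^2-2)^2/4$ stated in the introduction.

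For the lower bound $C_L^{as}(d) \geq c_1 d^4$, I would expand $c_d = \frac{1}{48d}(11d^4 - 38d^2 + 12d + 12) = \frac{11}{48}d^3 + O(d)$ and $C_P^{as}(d) = \frac{N(N-1)(2N-1)}{3} + \frac{(3-(-1)^d)N^2}{2} = \frac{d^3}{12} + O(d^2)$ with $N = \lfloor d/2 \rfloor$. Hence $c_d/C_P^{as}(d) \to 11/4$, the correction factor $\bigl(1+c_d/C_P^{as}(d)\bigr)^{-1} \to 4/15$, and Theorem \ref{thm1.1} combined with $(d^2-2)^2/4 \sim d^4/4$ yields $C_L^{as}(d) \geq \frac{d^4}{15}(1+o(1))$, so any $c_1 < 1/15$ works for large $d$.

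For the upper bound, by the theorem of Hoffmann-Ostenhof and Laptev \cite{hoffmann} quoted in the introduction, for each $\delta>0$ there is an antisymmetric $u \in C_0^\infty(\R^d)$ with continuous Hardy quotient $Q_c(u) := \int|\nabla u|^2 / \int |u|^2 |x|^{-2}\,dx \leq (d^2-2)^2/4+\delta$. Since $u$ is antisymmetric it is divisible by the Vandermonde $\prod_{i<j}(x_i - x_j)$, so $|u(x)| \lesssim |x|^{d(d-1)/2}$ and $|\nabla u(x)| \lesssim |x|^{d(d-1)/2 - 1}$ near $0$; consequently $\int_{|x|<r} |u|^2 |x|^{-2}\,dx$ and $\int_{|x|<r} |\nabla u|^2\,dx$ are both $O(r^{d^2 - 2})$. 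Multiplying $u$ by a radial smooth cutoff supported in $\{|x| \geq r\}$ preserves antisymmetry and changes $Q_c(u)$ by at most an arbitrarily small amount, so I may further assume $\supp u \subset \R^d \setminus B_r(0)$ for some $r>0$. I then set $v_\epsilon(n) := u(\epsilon n)$ for $n \in \Z^d$ and small $\epsilon>0$; this is antisymmetric under coordinate swaps and finitely supported.

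Since $|u|^2/|x|^2$ is smooth and compactly supported away from $0$, Riemann-sum approximation gives $\sum_n |v_\epsilon(n)|^2/|n|^2 = \epsilon^2 \sum_n (|u|^2/|x|^2)(\epsilon n) = \epsilon^{2-d}\int |u|^2 |x|^{-2}\,dx + o(\epsilon^{2-d})$. The Taylor expansion $D_j v_\epsilon(n) = \epsilon \partial_j u(\epsilon n) + O(\epsilon^2)$ (uniform on $\supp u$) similarly yields $\sum_n |Dv_\epsilon(n)|^2 = \epsilon^{2-d}\int |\nabla u|^2\,dx + o(\epsilon^{2-d})$. Dividing, the discrete Hardy quotient of $v_\epsilon$ tends to $Q_c(u) \leq (d^2-2)^2/4 + \delta$ as $\epsilon \to 0$, so $C_L^{as}(d) \leq (d^2-2)^2/4 + \delta$; letting $\delta \to 0$ and using $(d^2-2)^2 \leq d^4$ gives $C_L^{as}(d) \leq d^4/4$, so $c_2 = 1/4$ works. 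I expect the main obstacle to be purely technical: realizing the Vandermonde divisibility / cutoff cleanly (the polynomial version is classical but the $C^\infty$ version needs a short argument), and ensuring that the Taylor error in the discrete gradient is uniform over the compact origin-avoiding support. Neither step is deep, and both follow from smoothness of $u$ on $\supp u$.
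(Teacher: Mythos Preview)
Your argument is correct, and the lower bound is exactly what the paper does: expand the explicit constants from Theorem~\ref{thm1.1} and read off the $d^4$ growth.

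For the upper bound you take a genuinely different route from the paper. The paper does not discretize a continuous near-optimizer; instead it plugs in a purely lattice test function, namely the antisymmetric $u:\Z^d\to\C$ supported on the single sphere $|n|^2=C_P^{as}(d)$ (the Poincar\'e minimizer from Lemma~\ref{lem3.1}). Because no two lattice points on that sphere differ by a unit vector $e_j$, the cross terms in $\sum_n|D_ju(n)|^2$ vanish and one obtains the exact identity
\[
\frac{\sum_n|Du(n)|^2}{\sum_n|u(n)|^2|n|^{-2}}=2d\,C_P^{as}(d),
\]
which immediately gives $C_L^{as}(d)\le 2d\,C_P^{as}(d)\sim d^4/6$. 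This is a one-line computation with no limits, cutoffs, or Riemann sums, and it even produces a slightly sharper constant than your $(d^2-2)^2/4\sim d^4/4$. Your approach, on the other hand, has the conceptual advantage of tying the discrete sharp constant directly to the continuous one and showing $C_L^{as}(d)\le (d^2-2)^2/4$, an inequality the paper does not state; the technical steps you outline (the cutoff away from the origin, uniform Taylor remainder on the compact origin-avoiding support, Riemann-sum convergence) are all routine for smooth compactly supported $u$. The Vandermonde divisibility remark is more than you need: antisymmetry alone gives $u(0)=0$, hence $|u(x)|\lesssim|x|$, which already makes $|u|^2/|x|^2$ bounded near $0$ and the cutoff harmless.
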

In proving the above results we follow an approach similar to the one developed in \cite{gupta}. We convert the discrete Hardy inequalities stated above into an antisymmetric Hardy type inequalities on the torus for \emph{zero average} functions $\psi$:
$$
\int \psi  = 0.
$$
Before stating the inequalities on the torus we give the following definition:
\begin{definition}\label{def1.4}
Let $Q_d:= (-\pi, \pi)^d$ denote an open square in $\R^d$. Let $\psi : \overline{Q_d} \rightarrow \C$ be a map. Then we say it is \emph{$2\pi$-periodic in each variable} if 
\begin{align*}
    \psi(x_1,..,x_{i-1}, -\pi, x_{i+1},.., x_d) = \psi(x_1,..,x_{i-1}, \pi, x_{i+1},.., x_d),
\end{align*}
for all $1 \leq i \leq d$. 
\end{definition}

\begin{theorem}\label{thm1.5}
Let $\psi \in C^\infty(\overline{Q_d})$ with zero average such that all of its derivatives are $2\pi$-periodic in each variable. Furthermore assume $\psi$ satisfies the antisymmetric condition \eqref{1.2}.  Then, for $d \geq 2$, we have 
\begin{equation}\label{1.8}
    C_{\T}^{as}(d)\int_{Q_d} \frac{|\psi(x)|^2}{\sum_{j} \sin^2(x_j/2)} dx \leq \int_{Q_d} |\nabla \psi(x)|^2 dx,
\end{equation}
where 
\begin{equation}\label{1.9}
    C_{\T}^{as}(d) := \frac{1}{4} C_L^{as}(d) = \frac{(d^2-2)^2}{16} \Bigg(1+ \frac{c_d}{C_P^{as}(d)} \Bigg)^{-1},
\end{equation}
and $c_d$ and $C_P^{as}(d)$ are as defined in Theorem \ref{thm1.1}.
\end{theorem}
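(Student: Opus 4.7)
My strategy is to reduce Theorem~\ref{thm1.5} to two ingredients on the cube $Q_d=(-\pi,\pi)^d$: an antisymmetric Hardy inequality on $Q_d$ carrying the leading constant $(d^2-2)^2/16$ up to a remainder of the form $c_d\int|\psi|^2$, and a Poincar\'e inequality for antisymmetric mean-zero functions on the torus with constant $C_P^{as}(d)$; the remainder will then be absorbed by self-improvement. Write $W(x):=\sum_{j=1}^d\sin^2(x_j/2)$. Since $\sin^2(y/2)\le y^2/4$ we have $W(x)\le|x|^2/4$ on $Q_d$, so $R(x):=(4W(x))^{-1}-|x|^{-2}\ge 0$. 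The first ingredient is the estimate
\begin{equation*}
\int_{Q_d}|\nabla\psi|^2\,dx\;\ge\;\frac{(d^2-2)^2}{16}\int_{Q_d}\frac{|\psi|^2}{W(x)}\,dx\;-\;c_d\int_{Q_d}|\psi|^2\,dx,
\end{equation*}
with $c_d=(11d^4-38d^2+12d+12)/(48d)$. To obtain it, I would first prove $\int|\nabla\psi|^2\ge\frac{(d^2-2)^2}{4}\int|\psi|^2/|x|^2$ on $Q_d$ for our class of antisymmetric $\psi$ (using a Hoffmann-Ostenhof--Laptev-type ground-state representation with a $2\pi$-periodic antisymmetric function, or via the corner-domain Hardy inequality advertised as a side result in the abstract, applied on the fundamental wedge $\{x_1>\cdots>x_d\}$ of the $S_d$ action), then split $|x|^{-2}=(4W)^{-1}-R(x)$ to convert the $|x|^{-2}$ weight into $(4W)^{-1}$ and bound $\frac{(d^2-2)^2}{4}\int R\,|\psi|^2\le c_d\int|\psi|^2$ by an explicit estimate for $R$ that exploits the vanishing of $\psi$ on the antisymmetric zero set.

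The second ingredient is the antisymmetric Poincar\'e inequality $C_P^{as}(d)\int|\psi|^2\,dx\le\int|\nabla\psi|^2\,dx$. Expanding $\psi(x)=\sum_{n\in\Z^d}\widehat\psi(n)e^{in\cdot x}$, antisymmetry forces $\widehat\psi(n)=0$ whenever two coordinates of $n$ coincide, and by Parseval the inequality is equivalent to the combinatorial statement $\min\{|n|^2: n\in\Z^d,\ n_i\ne n_j\text{ for }i\ne j\}=C_P^{as}(d)$. This minimum is attained when the coordinates of $n$ form a consecutive block of integers around $0$: the block $\{-N,\ldots,N\}$ for $d=2N+1$ gives $2\sum_{k=1}^N k^2=N(N+1)(2N+1)/3$, and the block $\{-N,\ldots,N-1\}$ (equivalently $\{-N+1,\ldots,N\}$) for $d=2N$ gives $2\sum_{k=1}^{N-1}k^2+N^2$; a small algebraic rearrangement shows that both expressions equal the asserted $N(N-1)(2N-1)/3+(3-(-1)^d)N^2/2$. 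Substituting this Poincar\'e inequality into the first ingredient and rearranging,
\begin{equation*}
\left(1+\frac{c_d}{C_P^{as}(d)}\right)\int_{Q_d}|\nabla\psi|^2\,dx\;\ge\;\frac{(d^2-2)^2}{16}\int_{Q_d}\frac{|\psi|^2}{W(x)}\,dx,
\end{equation*}
which is Theorem~\ref{thm1.5} after dividing by the bracket.

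\textbf{Main obstacle.} The decisive step is the first ingredient, for which two independent difficulties must be handled simultaneously. First, the Hoffmann-Ostenhof--Laptev inequality on $\R^d$ is stated for compactly supported test functions while here $\psi$ is only periodic; moreover the natural antisymmetric candidate ground state $\prod_{i<j}\sin((x_i-x_j)/2)$ is \emph{not} $2\pi$-periodic in each variable (it changes sign under $x_j\mapsto x_j+2\pi$ when $d$ is odd). This forces either engineering a genuinely $2\pi$-periodic antisymmetric ground state that reproduces the full constant $(d^2-2)^2/4$ against the weight $|x|^{-2}$, or a reduction to the fundamental wedge $\{x_1>\cdots>x_d\}$ whose boundary collapses to a corner at the origin where the Hardy singularity sits---precisely the setting of the corner-domain Hardy inequality flagged in the abstract. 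Second, the estimate on $\int R\,|\psi|^2$ must be sharp enough to produce the specific expression $c_d=(11d^4-38d^2+12d+12)/(48d)$; a crude pointwise bound on $R$ is not good enough, since $R(\pi,0,\ldots,0)=\tfrac14-\tfrac{1}{\pi^2}$ is a $d$-independent constant whereas $c_d/(d^2-2)^2\sim 1/d$ decays. The antisymmetry of $\psi$ (in particular the vanishing of $\psi$ on each $\{x_i=x_j\}$) therefore has to be exploited carefully in this estimate, and this is where the bulk of the computation will lie; since $c_d\sim\tfrac{11}{48}d^3$ and $C_P^{as}(d)\sim\tfrac{1}{12}d^3$ grow at the same polynomial rate, the resulting correction factor $(1+c_d/C_P^{as}(d))^{-1}$ stays bounded below, ensuring $C_\T^{as}(d)\sim d^4$.
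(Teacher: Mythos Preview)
Your high-level architecture is exactly the paper's: first establish the intermediate inequality
\[
\int_{Q_d}|\nabla\psi|^2\,dx\;\ge\;\frac{(d^2-2)^2}{16}\int_{Q_d}\frac{|\psi|^2}{W}\,dx\;-\;c_d\int_{Q_d}|\psi|^2\,dx,
\]
then combine with the antisymmetric Poincar\'e inequality (your derivation of $C_P^{as}(d)$ via distinct-coordinate lattice points is correct and is precisely Lemma~\ref{lem3.1}), and finally self-improve. The disagreement is entirely in how to reach the intermediate inequality.

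The paper does \emph{not} pass through the Euclidean weight $|x|^{-2}$. It works directly on the wedge $\Omega_\T=Q_d\cap\{x_1<\cdots<x_d\}$ with the \emph{periodic} vector field
\[
F_\epsilon(x)=\alpha\,\frac{(\sin x_1,\ldots,\sin x_d)}{W+\epsilon^2}+\tfrac12\sum_{i<j}\frac{\cos(x_i/2)e_i-\cos(x_j/2)e_j}{\sin(x_j/2)-\sin(x_i/2)+\epsilon^2},
\]
expands $|\nabla\psi+F_\epsilon\psi|^2$, and after a long but elementary computation of $\operatorname{div}F_\epsilon-|F_\epsilon|^2$ and the limit $\epsilon\to0$ obtains the intermediate inequality with $\alpha=(d^2-2)/8$. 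The specific constant $c_d$ is not an a priori input; it falls out of the trigonometric identities (the cross terms between the two pieces of $F_\epsilon$ produce $T_1,T_2,T_3,T_4$, and after simplification one uses $d\sum_j\sin^4(x_j/2)\ge W^2$ and an elementary bound on $\sum_{i<j}(\sin^2+\sin^2+\sin\sin)$). Periodicity of $F_\epsilon$ is what kills the boundary terms on $\partial Q_d$.

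Your proposed route has two genuine gaps, both of which you flag but neither of which you resolve. First, the inequality $\int_{Q_d}|\nabla\psi|^2\ge\frac{(d^2-2)^2}{4}\int_{Q_d}|\psi|^2/|x|^2$ for \emph{periodic} antisymmetric $\psi$ is not available from either the $\R^d$ result or the corner-domain result: the vector field $x/|x|^2$ is not periodic, and on $\Omega_\T$ your $\psi$ does not vanish on the cube faces $x_j=\pm\pi$, so the integration by parts produces uncontrolled boundary terms. Second, even granting step 1, the bound $\frac{(d^2-2)^2}{4}\int R\,|\psi|^2\le c_d\int|\psi|^2$ with the \emph{specific} $c_d=(11d^4-38d^2+12d+12)/(48d)$ is a new estimate that you do not prove; your own observation that $R$ is of order one on large parts of $Q_d$ while $4c_d/(d^2-2)^2\sim 11/(12d)$ shows this cannot be pointwise, and you give no mechanism for the antisymmetry to supply the missing factor of $1/d$ in an averaged sense. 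The paper sidesteps both issues by never introducing $|x|^{-2}$ at all: the trigonometric vector field is periodic by design and manufactures $c_d$ as a byproduct rather than requiring it as a separate remainder estimate.
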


\begin{remark}
We note that $C_{\T}^{as} \sim d^4$ as $d \rightarrow \infty$ as compared to the linear growth of the constant in \eqref{1.8} without the antisymmetric condition (see \cite[Corollary 3.3]{gupta}).
\end{remark}
Similar to the integer lattice case, the constant $C_{\T}^{as}$ is asymptotically sharp:
\begin{theorem}\label{thm1.7}
Let $\psi \in C^\infty(\overline{Q_d})$ with zero average such that all of its derivatives are $2\pi$-periodic in each variable. Furthermore assume $\psi$ satisfies the antisymmetric condition \eqref{1.2}. Let $C_{\T}^{as}(d)$ be the sharp constant in 
\begin{equation}
     C_{\T}^{as}(d)\int_{Q_d} \frac{|\psi(x)|^2}{\sum_{j} \sin^2(x_j/2)} dx \leq \int_{Q_d} |\nabla \psi(x)|^2 dx.
\end{equation}
Then $C_{\T}^{as}(d) \sim d^4$ as $d \rightarrow \infty$. 
\end{theorem}

The proofs of the results are based on a simple and well-known method of proving Hardy type inequalities called \emph{expansion of the square} \cite{dolbeault, Ye}. It works as follows: Let $u \in C_0^\infty(\R^d)$ and consider
\begin{align*}
    0 &\leq \int_{\R^d} \Big|\nabla u(x) + \frac{(d-2)}{2}\frac{x}{|x|^2} u(x)\Big|^2 dx \\
    &= \int_{\R^d} |\nabla u(x)|^2 dx + \int_{\R^d} \frac{(d-2)^2}{4|x|^2} |u(x)|^2 dx + (d-2) \int_{\R^d} \frac{x}{|x|^2} \cdot \nabla |u(x)|^2 dx\\
    &= \int_{\R^d} |\nabla u(x)|^2 dx + \int_{\R^d} \Bigg[\frac{(d-2)^2}{4|x|^2} -(d-2)\text{div}\frac{x}{|x|^2}\Bigg]|u(x)|^2 dx\\
    &= \int_{\R^d} |\nabla u(x)|^2 dx - \frac{(d-2)^2}{4} \int_{\R^d} \frac{|u(x)|^2}{|x|^2} dx.
\end{align*}
In the above computations, we first used the chain rule and then integration by parts. This elegant idea has been used in different contexts to derive various kinds of Hardy type inequalities: improvements of \eqref{1.1} on domains \cite{dolbeault}, on domains with singularity on the boundary \cite{tidblom}, on the torus \cite{gupta}, for higher order operators \cite{lam, fritz}, and for discrete operators \cite{david, huang}. 

We exploit this method by making the following observation: if $u$ is antisymmetric then it vanishes on the plane $x_i = x_j$. This allows one to choose vector fields (in place of $x/|x|^2$) in the above expansion which are singular not only at origin but also on the plane $x_i=x_j$. This observation lies at the heart of all the proofs and we use it in various different forms throughout the paper.

We structure the paper as follows: In Section \ref{sec:alternate}, we give a new proof of inequality \eqref{1.3} of Hoffmann-Ostenhof and Laptev. In Section \ref{sec:torus}, we extend the ideas developed in Section \ref{sec:alternate} to prove Theorems \ref{thm1.5} and \ref{thm1.7}. Finally, in Section \ref{sec:discrete} we use Theorem \ref{thm1.5} to prove discrete antisymmetric Hardy inequalities on the integer lattices, namely we prove Theorems \ref{thm1.1} and \ref{thm1.3}. 

\section{Antisymmetric Hardy inequality on $\R^d$}\label{sec:alternate}
The main goal of this section is to prove inequality \eqref{1.3}. We begin with a simple lemma.
\begin{lemma}\label{lem2.1}
Let $f$ be a non-negative measurable function on $\R^d$ satisfying 
$$ f(\dots,x_j,\dots,x_i,\dots) = f(\dots,x_i,\dots,x_j,\dots),$$
for all $ 1\leq i \neq j \leq d$. Then 
\begin{equation}\label{2.1}
    \int_{\R^d} f(x) dx  = d! \int_{\Omega} f(x) dx,
\end{equation}
where $\Omega := \{x=(x_1, \dots, x_d) \in \R^d: x_1<x_2<\dots<x_d\}$ and $d! := d\cdot (d-1)\dots \cdot 1$.
\end{lemma}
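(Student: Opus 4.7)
The plan is to use the natural action of the symmetric group $S_d$ on $\R^d$ by coordinate permutation, decompose $\R^d$ (up to a null set) into the $d!$ translates of $\Omega$, and exploit the symmetry of $f$ to see that each translate contributes the same integral.

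In more detail, I would first set up for each permutation $\sigma \in S_d$ the open region
\[ \Omega_\sigma := \{x \in \R^d : x_{\sigma(1)} < x_{\sigma(2)} < \dots < x_{\sigma(d)}\}, \]
so that $\Omega$ corresponds to the identity permutation. I would then observe that the complement $\R^d \setminus \bigcup_{\sigma \in S_d} \Omega_\sigma$ is contained in the finite union of hyperplanes $\{x_i = x_j\}$ for $1 \le i < j \le d$, which has Lebesgue measure zero; and that the $\Omega_\sigma$ are pairwise disjoint. Therefore
\[ \int_{\R^d} f(x)\, dx = \sum_{\sigma \in S_d} \int_{\Omega_\sigma} f(x)\, dx. \]

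Next, for each $\sigma$ I would apply the linear change of variables $T_\sigma : y \mapsto (y_{\sigma(1)}, \dots, y_{\sigma(d)})$, whose Jacobian determinant has absolute value $1$ and which maps $\Omega$ bijectively onto $\Omega_\sigma$. By the repeated transposition hypothesis on $f$, the function is invariant under all of $S_d$, so $f \circ T_\sigma = f$ on $\Omega$, and hence $\int_{\Omega_\sigma} f(x)\, dx = \int_\Omega f(y)\, dy$ for every $\sigma$. Summing over the $d!$ permutations yields the claimed identity \eqref{2.1}.

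There is no real obstacle here: the only technical points are the measure-zero bookkeeping for the diagonal hyperplanes and the reduction of invariance under all of $S_d$ from invariance under transpositions, both of which are standard. The lemma will be used later to restrict integrals of antisymmetric $|u|^2$-type quantities to the fundamental domain $\Omega$, where the antisymmetric vanishing on the boundary $\partial \Omega$ can be exploited.
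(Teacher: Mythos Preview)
Your argument is correct and is essentially the same as the paper's: both decompose $\R^d$ up to a null set into the $d!$ regions $\Omega_\sigma$ indexed by $S_d$, then use that each permutation is a product of transpositions (Jacobian-one maps under which $f$ is invariant) to conclude that all pieces contribute equally. There is nothing to add.
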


\begin{proof}
Let $\pi$ be a permutation of $\{1, \dots, d\}$, that is, it is a bijective function from $\{1, \dots, d\}$ to itself. Let
$$ \Omega_{\pi} := \{x= (x_1, \dots, x_d) \in \R^d: x_{\pi(1)}<x_{\pi(2)}<\dots<x_{\pi(d)}\}.$$
Then it can be checked that $\Omega_{\pi}$ form a disjoint decomposition of $\R^d$ (up to a set of measure zero) as $\pi$ varies over all permutations of $\{1, \dots, d\}$ (we denote this set by $S_d$). Thus we have
$$ \int_{\R^d} f(x) dx = \sum_{\pi \in S_d} \int_{\Omega_{\pi}} f(x) dx.$$
We note that each permutation can be converted into the identity permutation through a finite sequence of swaps. Since each swap is a jacobian one transformation which keeps $f$ invariant, we get
$$\int_{\R^d} f(x) dx = \int_{\Omega} f(x) dx \sum_{\pi \in S_d} 1 = d! \int_{\Omega} f(x) dx.$$
\end{proof}
\begin{proof}[Proof of inequality \eqref{1.3}]
It can be checked that antisymmetry of $u$ implies that both $|u(x)|^2|x|^{-2}$ and $|\nabla u(x)|^2$ are symmetric, that is, they satisfy condition \eqref{2.1}. Thus, it is enough to prove the result on the set $\Omega = \{x_1<\dots<x_d\}.$

Let $\alpha \in \R$ (to be chosen later) and $\epsilon$ be a non-zero real number. Let 
$$F_\epsilon(x) := \alpha \frac{x}{|x|^2 + \epsilon^2} + \sum_{i=1}^{d} \sum_{j=i+1}^d \frac{e_i-e_j}{(x_j-x_i) + \epsilon^2},$$
be a smooth vector field (up to the boundary) on $\Omega$. Expanding the following square, along with integration by parts, we obtain
\begin{align*}
    0 \leq \int_{\Omega} |\nabla u + F_\epsilon u|^2 dx &= \int_{\Omega} |\nabla u|^2 dx + \int_{\Omega}|F_\epsilon|^2 |u|^2 dx + \int_{\Omega} F_\epsilon \cdot \nabla |u|^2 dx\\
    &= \int_{\Omega} |\nabla u|^2 dx- \int_{\Omega} \Big(\text{div}F_\epsilon - |F_\epsilon|^2\Big)|u|^2 dx.
\end{align*}
Therefore, we have
\begin{equation}\label{2.2}
    \int_{\Omega} |\nabla u|^2 dx \geq \int_{\Omega} \Big(\text{div}F_\epsilon - |F_\epsilon|^2\Big)|u|^2 dx.
\end{equation}
The $k^{th}$ component of $F_\epsilon = (F_\epsilon^1, \dots, F_\epsilon^d)$ is given by 
$$ F_\epsilon^k(x) = \alpha \frac{x_k}{|x|^2+\epsilon^2} - \sum_{i=1}^{k-1} \frac{1}{(x_k-x_i)+\epsilon^2} + \sum_{i=k+1}^d \frac{1}{(x_i-x_k)+\epsilon^2}.$$ 
A direct computation gives
$$ \partial_{x_k}F_\epsilon^k = \alpha \frac{|x|^2-2x_k^2 + \epsilon^2}{(|x|^2+\epsilon^2)^2} + \sum_{i=1}^{k-1} \frac{1}{((x_k-x_i)+\epsilon^2)^2}+ \sum_{i=k+1}^{d} \frac{1}{((x_i-x_k)+\epsilon^2)^2},$$
and 
\begin{align*}
    |F_\epsilon^k|^2 = &\alpha^2 \frac{x_k^2}{(|x|^2+\epsilon^2)^2}+ \sum_{i=1}^{k-1} \frac{1}{((x_k-x_i)+\epsilon^2)^2}+ \sum_{i=k+1}^{d} \frac{1}{((x_i-x_k)+\epsilon^2)^2} \\
    &-2\frac{\alpha}{|x|^2+\epsilon^2} \sum_{i=1}^{k-1}\frac{x_k}{((x_k-x_i)+\epsilon^2)} + 2\frac{\alpha}{|x|^2+\epsilon^2} \sum_{i=k+1}^d \frac{x_k}{((x_i-x_k)+\epsilon^2)} + ET(k),\\
\end{align*}
where $ET(k)$ (extra terms) is given by
\begin{align*}
    ET(k) := & 2 \sum_{i=1}^{k-1}\sum_{j=i+1}^{k-1} \frac{1}{((x_k-x_i)+\epsilon^2)((x_k-x_j)+\epsilon^2)}
    \\
    &- 2 \sum_{i=1}^{k-1}\sum_{j=k+1}^{d} \frac{1}{((x_k-x_i)+\epsilon^2)((x_j-x_k)+\epsilon^2)}\\
    &+ 2 \sum_{i=k+1}^{d}\sum_{j=i+1}^{d} \frac{1}{((x_i-x_k)+\epsilon^2)((x_j-x_k)+\epsilon^2)}.
\end{align*}
Summing above expressions with respect to $k$ and using (first we swap the sums and then the variables)
$$ \sum_{k=1}^d \sum_{i=1}^{k-1}\frac{x_k}{((x_k-x_i)+\epsilon^2)} = \sum_{k=1}^d \sum_{i=k+1}^d \frac{x_i}{((x_i-x_k) + \epsilon^2)},$$
we obtain
\begin{equation}\label{2.3}
\begin{split}
    \text{div}F_\epsilon - |F_\epsilon|^2 =& (\alpha(d-2)-\alpha^2) \frac{|x|^2}{(|x|^2 + \epsilon^2)^2} + 2\frac{\alpha}{|x|^2+\epsilon^2} \sum_{k=1}^d \sum_{i=k+1}^d \frac{(x_i-x_k)}{((x_i-x_k)+\epsilon^2)} \\
    &+ \alpha d \frac{\epsilon^2}{(|x|^2+\epsilon^2)^2} - \sum_{k=1}^d ET(k)\\
    & \geq (\alpha(d-2)-\alpha^2) \frac{|x|^2}{(|x|^2 + \epsilon^2)^2} + 2\frac{\alpha}{|x|^2+\epsilon^2} \sum_{k=1}^d \sum_{i=k+1}^d \frac{(x_i-x_k)}{((x_i-x_k)+\epsilon^2)} \\
    & - \sum_{k=1}^d ET(k),
\end{split}
\end{equation}
under the assumption that $\alpha$ is non-negative. Next we simplify $\sum_{k} ET(k)$. With that in mind, we observe that, changing the order of summation (with respect to $k$ and $i$) and then swapping the variables $k$ and $i$, we get
\begin{align*}
    - 2 \sum_{k=1}^d \sum_{i=1}^{k-1}\sum_{j=k+1}^{d}& \frac{1}{((x_k-x_i)+\epsilon^2)((x_j-x_k)+\epsilon^2)}\\
    &= -2 \sum_{k=1}^d \sum_{i=k+1}^{d}\sum_{j=i+1}^{d} \frac{1}{((x_i-x_k)+\epsilon^2)((x_j-x_i)+\epsilon^2)}.
\end{align*}
Using this in the expression of $ET$ we obtain
\begin{align*}
    \sum_{k=1}^d ET(k) = &2 \sum_{k=1}^d\sum_{i=1}^{k-1}\sum_{j=i+1}^{k-1} \frac{1}{((x_k-x_i)+\epsilon^2)((x_k-x_j)+\epsilon^2)}\\
    & - 2 \sum_{k=1}^d \sum_{i=k+1}^{d}\sum_{j=i+1}^{d} \frac{(x_i-x_k)}{((x_i-x_k)+\epsilon^2)((x_j-x_k)+\epsilon^2)((x_j-x_i)+\epsilon^2)}.
\end{align*}
Next swapping sums and variables first with respect to $k$ and $i$ and then with respect to $i$ and $j$ we get
\begin{align*}
    2 \sum_{k=1}^d\sum_{i=1}^{k-1}\sum_{j=i+1}^{k-1}& \frac{1}{((x_k-x_i)+\epsilon^2)((x_k-x_j)+\epsilon^2)}\\
    &= 2\sum_{k=1}^d\sum_{i=k+1}^{d}\sum_{j=i+1}^{d} \frac{1}{((x_j-x_k)+\epsilon^2)((x_j-x_i)+\epsilon^2)}. 
\end{align*}
Using the above two identities we get the following simplification:
\begin{equation}\label{2.4}
    \sum_{k=1}^d ET(k) = 2\epsilon^2 \sum_{k=1}^d \sum_{i=k+1}^{d}\sum_{j=i+1}^{d} \frac{1}{((x_i-x_k)+\epsilon^2)((x_j-x_k)+\epsilon^2)((x_j-x_i)+\epsilon^2)}. 
\end{equation}
Using equations \eqref{2.3} and \eqref{2.4} in \eqref{2.2} we obtain 
\begin{equation}\label{2.5}
\begin{split}
    \int_{\Omega} |\nabla u|^2& dx \geq (\alpha(d-2)-\alpha^2) \int_{\Omega} \frac{|x|^2}{(|x|^2+\epsilon^2)^2} |u|^2 dx\\
    &+ 2\alpha \sum_{k=1}^d \sum_{i=k+1}^d \int_{\Omega}  \frac{(x_i-x_k)}{((x_i-x_k) + \epsilon^2)(|x|^2+\epsilon^2)} |u|^2 dx \\
    &-2 \sum_{k=1}^d \sum_{i=k+1}^d \sum_{j=i+1}^d \int_{\Omega}\frac{\epsilon^2}{((x_i-x_k)+\epsilon^2)((x_j-x_k)+\epsilon^2)((x_j-x_i)+\epsilon^2)}|u|^2 dx.
\end{split}    
\end{equation}
The integrands in the first and second term in the RHS of \eqref{2.5} converge pointwise to $|x|^{-2}|u|^2$ as $\epsilon \rightarrow 0$. Moreover they can be bounded from above by $|x|^{-2}|u|^2$, which is an $L^1$ function, since $u(0) =0$. Hence, by dominated convergence theorem we have
\begin{equation}\label{2.6}
    \int_{\Omega} \frac{|x|^2}{(|x|^2+\epsilon^2)^2} |u|^2 dx \longrightarrow \int_{\Omega} \frac{|u|^2}{|x|^2} dx, \hspace{9pt} \text{as} \hspace{5pt} \epsilon \rightarrow 0,
\end{equation}
and 
\begin{equation}\label{2.7}
    \int_{\Omega}  \frac{(x_i-x_k)}{((x_i-x_k) + \epsilon^2)(|x|^2+\epsilon^2)} |u|^2 dx \longrightarrow \int_{\Omega} \frac{|u|^2}{|x|^2} dx, \hspace{9pt} \text{as} \hspace{5pt} \epsilon \rightarrow 0.
\end{equation}
Next we compute the limit of the last term in \eqref{2.5}. The integrand in the last term converges pointwise to zero. Using the Taylor's theorem on $u$ around the points on the plane $x_i=x_j$ in the direction orthogonal to the plane we get 
\begin{equation}\label{2.8}
   |u(x)| \leq c |x_i-x_j|, 
\end{equation}
for all $i \neq j$. In the above estimate we used that $u$ vanishes on the plane $x_i=x_j$ and that the gradient of $u$ is uniformly bounded from above by a constant $c$. Using \eqref{2.8}, we obtain
\begin{align*}
    \frac{\epsilon^2}{((x_i-x_k)+\epsilon^2)((x_j-x_k)+\epsilon^2)((x_j-x_i)+\epsilon^2)}|u|^2 \leq \frac{|u|^2}{|x_j-x_k||x_i-x_j|} \leq c^2 \in L^1.
\end{align*}
Thus by dominated convergence theorem we get
\begin{equation}\label{2.9}
    \int_{\Omega} \frac{\epsilon^2}{((x_i-x_k)+\epsilon^2)((x_j-x_k)+\epsilon^2)((x_j-x_i)+\epsilon^2)}|u|^2 dx \longrightarrow 0.
\end{equation}
Using limits \eqref{2.6}, \eqref{2.7} and \eqref{2.9} in the estimate \eqref{2.5} we get
\begin{equation}\label{2.10}
\begin{split}
    \int_{\Omega} |\nabla u|^2 dx \geq &(\alpha(d-2)-\alpha^2) \int_{\Omega}\frac{|u|^2}{|x|^2} dx + \alpha(d^2-d) \int_{\Omega}\frac{|u|^2}{|x|^2} dx\\
    &=(-\alpha^2 + \alpha(d^2-2)) \int_{\Omega}\frac{|u|^2}{|x|^2} dx.
\end{split}
\end{equation}
Finally, taking $\alpha = (d^2-2)/2 \geq 0$, we get the desired inequality.  
\end{proof}
We note that, in the proof of inequality \eqref{1.3}, we essentially reduced the antisymmteric Hardy inequality on $\R^d$ \eqref{1.3} into a Hardy inequality on a domain with the singular point $x=0$ on the boundary \eqref{2.10}. We state \eqref{2.10} as a separate result since it is of independent interest.

\begin{theorem}\label{thm2.2}
Let $\Omega := \{x=(x_1, \dots, x_d) \in \R^d: x_1<\dots<x_d\}$ be a domain (open and connected) in $\R^d$. Then, for $u \in C_0^\infty(\Omega)$ we have
\begin{equation}\label{2.11}
    \frac{(d^2-2)^2}{4}\int_{\Omega} \frac{|u(x)|^2}{|x|^2} dx \leq \int_{\Omega} |\nabla u(x)|^2 dx.
\end{equation}
The constant in \eqref{2.11} is sharp.
\end{theorem}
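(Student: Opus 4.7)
My plan is to dispatch the inequality by specializing the calculation already carried out in the proof of \eqref{1.3}, and then to derive sharpness by reducing to the known sharpness of the Hoffmann-Ostenhof--Laptev theorem.

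For the inequality itself, the key observation is that any $u \in C_0^\infty(\Omega)$ has support compactly separated from both the origin and every plane $\{x_i = x_j\}$, all of which lie on $\partial\Omega$. Therefore the unregularized vector field
\[
F(x) := \alpha\,\frac{x}{|x|^2} + \sum_{i<j}\frac{e_i - e_j}{x_j - x_i}
\]
is smooth on $\supp(u)$, and the argument preceding this theorem carries over verbatim with $\epsilon = 0$. Setting $\epsilon = 0$ kills $\sum_k ET(k)$ via identity \eqref{2.4}, collapses the limits \eqref{2.6}--\eqref{2.9} into exact equalities, and leaves the integration by parts free of boundary contributions. The expansion of the square then reduces to
\[
\int_\Omega |\nabla u|^2\, dx \;\geq\; \bigl(\alpha(d^2-2) - \alpha^2\bigr)\int_\Omega \frac{|u|^2}{|x|^2}\, dx,
\]
and $\alpha = (d^2-2)/2$ maximizes the right-hand side, giving \eqref{2.11}.

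For sharpness I would argue by contradiction: suppose \eqref{2.11} held with some $C > (d^2-2)^2/4$, and fix an arbitrary antisymmetric $v \in C_0^\infty(\R^d)$. Introduce the product cutoff
\[
\chi_\delta(x) := \prod_{i=1}^{d-1}\phi\!\left(\frac{x_{i+1} - x_i}{\delta}\right),
\]
where $\phi \in C^\infty(\R;[0,1])$ satisfies $\phi \equiv 0$ on $(-\infty,1]$ and $\phi \equiv 1$ on $[2,\infty)$; then $v\chi_\delta \in C_0^\infty(\Omega)$, and the hypothetical improved inequality applies to $v\chi_\delta$. Passing to the limit $\delta \to 0$, the term $C\int_\Omega |v\chi_\delta|^2/|x|^2\,dx$ converges to $C\int_\Omega |v|^2/|x|^2\,dx$ by dominated convergence. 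On the right, decomposing $\nabla(v\chi_\delta) = \chi_\delta\nabla v + v\nabla\chi_\delta$, the first piece converges to $\nabla v$ in $L^2$, while for the cross-term the antisymmetry of $v$ furnishes the Taylor bound $|v(x)| \leq c\,\dist(x,\partial\Omega)$ near $\partial\Omega$ (as in \eqref{2.8}), which combined with $|\nabla\chi_\delta| \lesssim 1/\delta$ on a $\delta$-tube around $\partial\Omega$ of volume $O(\delta)$ within $\supp(v)$ yields $\int|v\nabla\chi_\delta|^2 = O(\delta) \to 0$. The limiting inequality $C\int_\Omega |v|^2/|x|^2\,dx \leq \int_\Omega |\nabla v|^2\,dx$ transfers to $\R^d$ with the same constant $C$ via Lemma \ref{lem2.1} applied to both sides (both integrands being permutation-symmetric), yielding the antisymmetric Hardy inequality on $\R^d$ with constant $C > (d^2-2)^2/4$, contradicting the sharpness in the Hoffmann-Ostenhof--Laptev theorem.

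The main technical step is the cutoff-error estimate $\int|v\nabla\chi_\delta|^2 = O(\delta)$, which relies on the first-order vanishing of antisymmetric functions on the coordinate-planes forming $\partial\Omega$; everything else in the plan is either routine or already accomplished in the preceding argument.
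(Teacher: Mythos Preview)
Your proof is correct. For the inequality you follow exactly the paper's route: the computation leading to \eqref{2.10} is precisely what the paper intends as the proof of \eqref{2.11}, and your observation that $u\in C_0^\infty(\Omega)$ permits taking $\epsilon=0$ from the outset is a legitimate (and slightly cleaner) shortcut, since compact support in $\Omega$ keeps $\supp u$ away from every singularity of $F$.

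The paper, however, does not actually supply a proof of sharpness for Theorem~\ref{thm2.2}; it merely asserts it after deriving the inequality. Your cutoff argument fills this gap correctly. The ingredients are all sound: $v\chi_\delta$ is compactly supported in $\Omega$; the dominated-convergence step on the left uses $|v|^2/|x|^2\in L^1$ (which follows from $v(0)=0$ and smoothness); and the error estimate $\int|v\nabla\chi_\delta|^2=O(\delta)$ is justified term-by-term, since the $i$-th summand of $\nabla\chi_\delta$ is $O(1/\delta)$ on the slab $\{\delta\le x_{i+1}-x_i\le 2\delta\}$ where the bound $|v|\le c\,|x_{i+1}-x_i|\le 2c\delta$ from \eqref{2.8} applies, giving an $O(1)$ integrand on a set of measure $O(\delta)$ within $\supp v$. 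Lemma~\ref{lem2.1} then transfers the limiting inequality from $\Omega$ to $\R^d$, and the contradiction with the sharpness of \eqref{1.3} in the Hoffmann-Ostenhof--Laptev theorem closes the argument. So your proposal matches the paper where the paper has content and correctly supplies what the paper leaves implicit.
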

The question of finding the sharp constant in the Hardy inequality on a domain $\Omega$ with singularity $x=0 \in \partial \Omega$ (denoted by $\mu(\Omega)$) is an interesting one. Broadly speaking the value $\mu(\Omega)$ depends on the geometry of the boundary at the point of singularity. The first explicit case was given by Filippas, Tertikas and Tidblom \cite{tidblom}, who proved that $\mu (\R^d_+) = d^2/4$, where \emph{half-space} is given by $\R^d_+ :=\{x=(x_1, \dots, x_d) \in \R^d: x_d>0\}$. It was later proved that if $\Omega$ is a Lipschitz domain and of class $C^2$ at $0 \in \partial \Omega$ then $\mu(\Omega) \leq \mu(\R^d_+) = d^2/4$ \cite{fall}. The condition of $C^2$ at the singularity cannot be removed. Indeed there are domains $\Omega$ which are only Lipschitz at the point of singularity and $\mu(\Omega) > d^2/4$. One such family of examples is given by \emph{cones} with vertex at the point of singularity \cite{cazacu}. Theorem \ref{thm2.2} gives an explicit example of a domain whose boundary is Lipschitz at $x=0$ and which exceeds the upper bound of $d^2/4$. In fact, quite remarkably, the sharp constant grows at the rate $d^4$ which is notably better than the quadratic growth of the upper bound.

\section{Antisymmetric Hardy inequality on the torus}\label{sec:torus}
In this section we shall be concerned with proving the Hardy inequality for antisymmetric functions on the torus. We begin by proving a Poincar\'e-Friedrichs inequality that we use as an ingredient in the proof of Theorem \ref{thm1.5}.   

\begin{lemma}[Antisymmetric Poincar\'e inequality]\label{lem3.1}
Let $\psi \in C^\infty(\overline{Q_d})$ such that all of its derivatives are $2\pi$-periodic in each variable (see Definition \ref{def1.4}). Furthermore assume $\psi$ satisfies the antisymmetric condition \eqref{1.2}. Then we have
\begin{equation}\label{3.1}
      C_P^{as}(d) \int_{Q_d} |\psi(x)|^2 dx \leq \int_{Q_d} |\nabla \psi(x)|^2 dx,
\end{equation}
where
\begin{equation}\label{3.2}
    C_P^{as}(d) := N(N-1)(2N-1)/3 + (3-(-1)^d)N^2/2, 
\end{equation}
and $N := \lfloor d/2 \rfloor$. Moreover the constant is sharp: any non-zero antisymmetric function $\psi(x) = (2\pi)^{-d/2} \sum_{n} a(n) e^{-i n \cdot x}$ with $a(n) = 0$ for $|n|^2 \neq C_P^{as}(d)$ optimizes \eqref{3.1}.
\end{lemma}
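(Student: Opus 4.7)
The plan is to pass to the Fourier side on $\T^d$ and reduce the inequality to an elementary combinatorial minimization over integer vectors with pairwise distinct entries. Since $\psi \in C^\infty(\overline{Q_d})$ is periodic with all derivatives periodic, it has an absolutely convergent Fourier expansion
\[
\psi(x) = (2\pi)^{-d/2}\sum_{n \in \Z^d} a(n)\, e^{-in\cdot x},
\]
and Parseval's identity gives $\int_{Q_d} |\psi|^2\,dx = \sum_n |a(n)|^2$ and $\int_{Q_d}|\nabla\psi|^2\,dx = \sum_n |n|^2|a(n)|^2$. The zero-average hypothesis says $a(0)=0$, which is in any case automatic from antisymmetry for $d \geq 2$.

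Next I would transfer antisymmetry from $\psi$ to $a$. A direct substitution shows that the swap $\psi(\ldots,x_i,\ldots,x_j,\ldots) = -\psi(\ldots,x_j,\ldots,x_i,\ldots)$ is equivalent to the Fourier coefficient identity $a(\ldots,n_i,\ldots,n_j,\ldots) = -a(\ldots,n_j,\ldots,n_i,\ldots)$. In particular, $a(n) = 0$ whenever two coordinates of $n$ coincide, so the Fourier support lies in
\[
\mathcal{A} := \{n \in \Z^d : n_1, \ldots, n_d \text{ are pairwise distinct}\}.
\]
The lemma thus reduces to showing the uniform lower bound $\inf_{n \in \mathcal{A}} |n|^2 = C_P^{as}(d)$.

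For the lower bound I would sort the absolute values $|n_{(1)}| \leq \ldots \leq |n_{(d)}|$ and use the pigeonhole observation that the integer interval $\{-k,\ldots,k\}$ contains only $2k+1$ elements; hence at most $2k+1$ of the pairwise distinct $n_i$ can satisfy $|n_i| \leq k$. This yields the pointwise bound $|n_{(j)}| \geq \lfloor j/2 \rfloor$, and so
\[
|n|^2 \;\geq\; \sum_{j=1}^{d} \lfloor j/2 \rfloor^2.
\]
A short case split on the parity of $d$ then identifies the last sum with the closed form $C_P^{as}(d)$. The matching upper bound is witnessed by the vector $n^\ast = (-N+1,-N+2,\ldots,N)$ when $d = 2N$, and $n^\ast = (-N,\ldots,N)$ when $d = 2N+1$, both of which lie in $\mathcal{A}$ and satisfy $|n^\ast|^2 = C_P^{as}(d)$.

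For sharpness and the characterization of optimizers I would exhibit the Slater-determinant function $\psi(x) = \det\bigl[e^{-i n^\ast_j x_k}\bigr]_{j,k=1}^d$, which is antisymmetric by construction and whose Fourier spectrum is exactly the $S_d$-orbit of $n^\ast$; every element of this orbit has Euclidean norm squared $C_P^{as}(d)$, so Parseval forces equality. More generally, any antisymmetric $\psi$ whose Fourier transform is supported on the set $\{|n|^2 = C_P^{as}(d)\}$ optimizes the inequality, which is precisely the statement of the lemma. The only nontrivial ingredient is the combinatorial bound in the third paragraph, and even that is routine; the one point requiring real care is the parity case split that matches $\sum_{j=1}^d \lfloor j/2\rfloor^2$ with the stated polynomial expression for $C_P^{as}(d)$.
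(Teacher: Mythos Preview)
Your proof is correct and follows essentially the same route as the paper: pass to Fourier coefficients via Parseval, use antisymmetry to force the support onto integer vectors with pairwise distinct entries, minimize $|n|^2$ over that set, and exhibit an optimizer supported on the minimizing orbit. Your pigeonhole bound $|n_{(j)}|\ge\lfloor j/2\rfloor$ makes explicit the step the paper leaves as ``it can be verified,'' and your Slater-determinant optimizer is exactly the paper's sign-of-permutation construction written in determinant form.
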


\begin{remark}
We note that $C_P^{as}(d) \sim d^3$ as $d \rightarrow \infty$, while the sharp constant in the classical Poincar\'e-Friedrichs inequality equals one.
\end{remark}

\begin{proof}
We begin with expanding $\psi(x) = (2\pi)^{-d/2} \sum_{n \in \Z^d}a(n) e^{-i n \cdot x}$. The periodicity of $\psi$ along with the orthonormality of $\{(2\pi)^{-d/2} e^{-i n \cdot x}\}$ gives
$$ \int_{Q_d} |\psi|^2 = \sum_{n \in \Z^d} |a(n)|^2, \hspace{19pt} \int_{Q_d} |\nabla \psi|^2 = \sum_{n\in \Z^d} |n|^2 |a(n)|^2.$$

We start with a simple observation about the Fourier coefficients $a(n)$ of an antisymmetric function $\psi$: Let $n = (n_1, \dots, n_d) \in \Z^d$ such that $n_i \neq n_j$ for $1\leq i \neq j \leq d$. Without loss of generality (this would keep $|n|^2$ invariant), let us assume that coordinates of vector $n$ are arranged in non-decreasing order, that is, $|n_1|  \leq \dots  \leq |n_d|$. Then, since all coordinates of $n$ are distinct we have $$ |n_i| \geq |m_i|,$$ for $1 \leq i \leq d$, where vector $m = (m_1, \dots, m_d)$ is given by (with $N:= \lfloor d/2 \rfloor$)
\begin{align*}
    &(0, 1, -1, 2, -2, \dots, N, -N) \hspace{139pt} \text{if} \hspace{10pt} d \hspace{5pt} \text{is odd}.\\
    &(0, 1, -1, \dots, (N-1), -(N-1), \pm N)  \hspace{86pt} \text{if} \hspace{9pt} d \hspace{5pt} \text{is even}.
\end{align*}
Therefore we get 
$$ |n|^2 \geq |m|^2 = N(N-1)(2N-1)/3 + (3-(-1)^d)N^2/2 =: C_P^{as}(d).$$

In other words, if $|n|^2 < C_P^{as}(d)$, there exist $i \neq j$ such that $n_i=n_j$. Since $a(n)$ is antisymmetric (direct implication of antisymmetry of $\psi$), we have 
$$ a(\dots, n_i, \dots, n_j, \dots) = a(\dots, n_j, \dots, n_i, \dots) = - a(\dots, n_i, \dots, n_j, \dots),$$
which implies $a(n) = 0$. Therefore we have 
\begin{align*}
   \int_{Q_d} |\nabla \psi|^2 = \sum_{n \in \Z^d} |n|^2 |a(n)|^2 = \sum_{|n|^2 \geq C_P^{as}(d)} |n|^2 |a(n)|^2 &\geq C_P^{as}(d) \sum_{n \in \Z^d} |a(n)|^2 \\
   &= C_P^{as}(d)\int_{Q_d} |\psi|^2 dx. 
\end{align*}
Moreover, the above inequality becomes an equality for any non-zero antisymmetric function $\psi(x) = (2\pi)^{-d/2} \sum_{n} a(n) e^{-i n \cdot x}$ satisfying $a(n) = 0$ for $|n|^2 \neq C_P^{as}(d)$. Next we prove the existence of such a function. We first note that $\psi(x)$ is antisymmetric if and only if $a(n)$ is antisymmetric. Therefore, it is sufficient to define $a(n)$ for $n=(n_1, \dots, n_d)$ satisfying $n_i \neq n_j$ for $i \neq j$ and $|n|^2 = C_P^{as}(d)$. It can be checked that $n \in \Z^d$ satisfies the previous conditions if and only if it is obtained by permuting the coordinates of the following vector (with $N:= \lfloor d/2 \rfloor$):
\begin{align*}
    &(0, 1, -1, 2, -2, \dots, N, -N) \hspace{139pt} \text{if} \hspace{10pt} d \hspace{5pt} \text{is odd}.\\
    &(0, 1, -1, \dots, (N-1), -(N-1), \pm N)  \hspace{86pt} \text{if} \hspace{9pt} d \hspace{5pt} \text{is even}.
\end{align*}
Next we use an algebraic fact about permutations: every permutation can be written as a product of either even or odd number of 2-cycles (`swaps') but not both. If $n$ can be obtained by a product of even number of transpositions (swaps) then we define $a(n) := 1$ otherwise we define $a(n) := -1$. 

Note that if $(n_1, \dots n_i, \dots, n_j, \dots ,n_d)$ is an even (odd) permutation of the above defined vectors then $(n_1, \dots n_j, \dots, n_i, \dots ,n_d)$ is an odd (even) permutation. This implies that $a(n)$ is antisymmetric. 
\end{proof}

\begin{proof}[Proof of Theorem \ref{thm1.5}]
Antisymmetry of $\psi$ implies that both $|\nabla \psi|^2$ and $\frac{|\psi|^2}{\sum_j \sin^2(x_j/2)}$ are symmetric and hence via Lemma \ref{lem2.1} we can restrict ourselves to the set
$$ \Omega_{\T}:= \{x=(x_1, \dots, x_d) \in Q_d: x_1<\dots<x_d\}.$$

Let $F_\epsilon = (F_\epsilon^1, \dots, F_\epsilon^d)$ be a smooth vector field in $\Omega_{\T}$ given by 
$$ F_\epsilon := \alpha \frac{(\sin x_1, \dots, \sin x_d)}{\omega + \epsilon^2} +  \frac{1}{2}\sum_{i=1}^{d} \sum_{j=i+1}^d \frac{\cos(x_i/2) e_i- \cos(x_j/2) e_j}{(\sin(x_j/2)-\sin(x_i/2)) + \epsilon^2},$$
where $\omega(x) := \sum_j \sin^2(x_j/2)$. Note that $F_\epsilon^k$ is $2\pi$-periodic with respect to variable $x_k$. Expanding the square $|\nabla \psi + F_\epsilon \psi|^2$ and using integration by parts along with periodicity of $F_\epsilon^k$ gives (see \eqref{2.2})
\begin{equation}\label{3.3}
    \int_{\Omega_{\T}} |\nabla \psi|^2 dx \geq \int_{\Omega_{\T}}\Big(\text{div}F_\epsilon-|F_\epsilon|^2\Big) |\psi|^2 dx.
\end{equation}
It can be checked that $F_\epsilon^k$ has the following expression:
\begin{align*}
   F_\epsilon^k(x) = &\alpha \frac{\sin x_k}{\omega + \epsilon^2} - \frac{1}{2}\sum_{i=1}^{k-1} \frac{\cos(x_k/2)}{(\sin(x_k/2)-\sin(x_i/2)) +\epsilon^2}\\
   &+ \frac{1}{2} \sum_{i=k+1}^d \frac{\cos(x_k/2)}{ (\sin(x_i/2)-\sin(x_k/2)) + \epsilon^2}. 
\end{align*}
Let $\omega_\epsilon := \omega+ \epsilon^2$. Using the standard rules of calculus and rearrangement of the sums along with the trigonometric identities: $\cos^2 x = 1-\sin^2 x$, $\cos x = 1-2\sin^2(x/2)$, we obtain
\begin{equation}\label{3.4}
\begin{split}
    \text{div}&F_\epsilon(x) = \frac{\alpha}{\omega_\epsilon^2}\Big(d \omega_\epsilon - 2\omega  + 2\sum_k \sin^4(x_k/2) -2 \omega \omega_\epsilon\Big) \\
    &+
    \frac{\epsilon^2}{4}\sum_{k=1}^d \sum_{i=1}^{k-1} \frac{\sin(x_k/2)-\sin(x_i/2)}{(\sin(x_k/2)-\sin(x_i/2)+ \epsilon^2)^2} + 
    \frac{1}{4}\sum_{k=1}^d \sum_{i=1}^{k-1} \frac{(1-\sin(x_i/2)\sin (x_k/2))}{(\sin(x_k/2)-\sin(x_i/2)+ \epsilon^2)^2}\\
    &+ \frac{1}{4}\sum_{k=1}^d \sum_{i=k+1}^{d} \frac{(1-\sin(x_i/2)\sin (x_k/2))}{(\sin(x_i/2)-\sin(x_k/2) + \epsilon^2)^2},
\end{split}
\end{equation}
and
\begin{equation}\label{3.5}
\begin{split}
    &|F_\epsilon|^2 = 4\alpha^2 \frac{\omega}{\omega_\epsilon^2} - 4\alpha^2 \frac{\sum_k \sin^4(x_k/2)}{\omega_\epsilon^2} + \frac{1}{4}\sum_{k=1}^d \sum_{i=1}^{k-1} \frac{\cos^2(x_k/2)}{(\sin(x_k/2)-\sin(x_i/2)+ \epsilon^2)^2}\\
    &+ \frac{1}{4}\sum_{k=1}^d \sum_{i=k+1}^{d} \frac{\cos^2(x_k/2)}{(\sin(x_i/2)-\sin(x_k/2) + \epsilon^2)^2} -\frac{\alpha}{\omega_\epsilon} \sum_{k=1}^d \sum_{i=1}^{k-1} \frac{\sin x_k \cos(x_k/2)}{\sin(x_k/2)-\sin(x_i/2)+\epsilon^2} \\
    & + \frac{\alpha}{\omega_\epsilon} \sum_{k=1}^d \sum_{i=k+1}^d \frac{\sin x_k \cos(x_k/2)}{\sin(x_i/2)-\sin(x_k/2)+\epsilon^2}+
    \sum_{k=1}^d ET(k),    
\end{split}
\end{equation}
where 
\begin{align*}
    ET(k) := &\frac{1}{2} \sum_{i=1}^{k-1}\sum_{j=i+1}^{k-1} \frac{\cos^2(x_k/2)}{(\sin(x_k/2)-\sin(x_i/2)+\epsilon^2)(\sin(x_k/2)-\sin(x_j/2)+\epsilon^2)}\\
    & -\frac{1}{2}\sum_{i=1}^{k-1} \sum_{j=k+1}^d \frac{\cos^2(x_k/2)}{(\sin(x_k/2)-\sin(x_i/2)+\epsilon^2)(\sin(x_j/2)-\sin(x_k/2)+\epsilon^2)} \\
    & + \frac{1}{2} \sum_{i=k+1}^d \sum_{j=i+1}^d \frac{\cos^2(x_k/2)}{(\sin(x_i/2)-\sin(x_k/2)+\epsilon^2)(\sin(x_j/2)-\sin(x_k/2)+\epsilon^2)}.
\end{align*}
Let $W_\epsilon := \text{div}F_\epsilon-|F_\epsilon|^2$. Next,  we simplify terms of $W_\epsilon$. Throughout, we will be using the following `swapping rule': Let $f$ be a function of two variables $i$ and $k$. Then first changing the order of summation and then swapping the variables, we get 
$$ \sum_{k=1}^d \sum_{i=1}^{k-1} f(i, k) = \sum_{i=1}^d \sum_{k=i+1}^d f(i, k) = \sum_{k=1}^d \sum_{i=k+1}^d f(k, i).$$
Using the above rule, (sum of second and third term in $\text{div}F_\epsilon$) - (sum of third and fourth term in $|F_\epsilon|^2$) becomes
\begin{equation}\label{3.6}
\begin{split}
    \frac{1}{2}&\sum_{k=1}^d \sum_{i=1}^{k-1} \frac{1-\sin(x_i/2)\sin(x_k/2)}{(\sin(x_k/2)-\sin(x_i/2)+\epsilon^2)^2} - \frac{1}{4}\sum_{k=1}^d \sum_{i=1}^{k-1} \frac{\cos^2(x_k/2)}{(\sin(x_k/2)-\sin(x_i/2)+ \epsilon^2)^2}\\
    & - \frac{1}{4}\sum_{k=1}^d \sum_{i=1}^{k-1} \frac{\cos^2(x_i/2)}{(\sin(x_k/2)-\sin(x_i/2)+ \epsilon^2)^2}\\
    &= \frac{1}{4} \sum_{k=1}^d \sum_{i=1}^{k-1}\frac{\sin^2(x_i/2) + \sin^2(x_k/2) -2\sin(x_i/2)\sin(x_k/2)}{(\sin(x_k/2)-\sin(x_i/2)+ \epsilon^2)^2}\\
    &= \frac{1}{4} \sum_{k=1}^d \sum_{i=1}^{k-1}\frac{(\sin(x_k/2) - \sin(x_i/2))^2}{(\sin(x_k/2)-\sin(x_i/2)+ \epsilon^2)^2} =: T_1^\epsilon(x).
\end{split}
\end{equation}
Next we simplify
\begin{equation}\label{3.7}
\begin{split}
    \frac{\alpha}{\omega_\epsilon}& \sum_{k=1}^d \sum_{i=1}^{k-1} \frac{\sin x_k \cos(x_k/2)}{\sin(x_k/2)-\sin(x_i/2)+\epsilon^2} -
    \frac{\alpha}{\omega_\epsilon} \sum_{k=1}^d \sum_{i=k+1}^d \frac{\sin x_k \cos(x_k/2)}{\sin(x_i/2)-\sin(x_k/2)+\epsilon^2}\\
    &= \frac{\alpha}{\omega_\epsilon} \sum_{k=1}^d \sum_{i=1}^{k-1} \frac{\sin x_k \cos(x_k/2) -\sin x_i \cos(x_i/2)}{\sin(x_k/2)-\sin(x_i/2)+\epsilon^2}\\
    &= \frac{2\alpha}{\omega_\epsilon} \sum_{k=1}^d \sum_{i=1}^{k-1} \frac{\sin(x_k/2)-\sin(x_i/2)-(\sin^3(x_k/2)-\sin^3(x_i/2))}{\sin(x_k/2)-\sin(x_i/2)+\epsilon^2}\\ 
    &= \frac{2\alpha}{\omega_\epsilon}\sum_{k=1}^d \sum_{i=1}^{k-1} \frac{\sin(x_k/2)-\sin(x_i/2)}{\sin(x_k/2)-\sin(x_i/2)+\epsilon^2} \\
    &- \frac{2\alpha}{\omega_\epsilon}\sum_{k=1}^d \sum_{i=1}^{k-1} \frac{(\sin(x_k/2)-\sin(x_i/2))(\sin^2(x_i/2)+\sin^2(x_k/2)+\sin(x_i/2)\sin(x_k/2))}{\sin(x_k/2)-\sin(x_i/2)+\epsilon^2}\\
    &=:T_2^\epsilon(x). 
\end{split}    
\end{equation}
Now we focus on $\sum_k ET(k)$. Applying the swapping rule first with respect to $k$ and $i$ and then with respect to $i$ and $j$ we get (this corresponds to first sum in $\sum_{k}ET(k)$)
\begin{align*}
    \frac{1}{2} &\sum_{k=1}^d \sum_{i=1}^{k-1}\sum_{j=i+1}^{k-1} \frac{\cos^2(x_k/2)}{(\sin(x_k/2)-\sin(x_i/2)+\epsilon^2)(\sin(x_k/2)-\sin(x_j/2)+\epsilon^2)} \\
    &= \frac{1}{2} \sum_{k=1}^d \sum_{i=k+1}^d \sum_{j=i+1}^d \frac{\cos^2(x_j/2)}{(\sin(x_j/2)-\sin(x_k/2) + \epsilon^2)(\sin(x_j/2)-\sin(x_i/2)+\epsilon^2)}.
\end{align*}
Applying swapping rule (with respect to $i$ and $k$) to the second sum in $\sum_k ET(k)$ we obtain
\begin{align*}
    -\frac{1}{2}&\sum_{k=1}^d \sum_{i=1}^{k-1} \sum_{j=k+1}^d  \frac{\cos^2(x_k/2)}{(\sin(x_k/2)-\sin(x_i/2)+\epsilon^2)(\sin(x_j/2)-\sin(x_k/2)+\epsilon^2)} \\
    &=-\frac{1}{2} \sum_{k=1}^d \sum_{i=k+1}^{d} \sum_{j=i+1}^d \frac{\cos^2(x_i/2)}{(\sin(x_i/2)-\sin(x_k/2)+\epsilon^2)(\sin(x_j/2)-\sin(x_i/2)+\epsilon^2)}. 
\end{align*}
Using the above two expressions we get
\begin{equation}\label{3.8}
    2 \sum_{k=1}^d ET(k) = T_3^\epsilon(x) + T_4^\epsilon(x),  
\end{equation}
where 
\begin{align*}
    &T_3^\epsilon := \\
    &\sum_{k=1}^d \sum_{i=k+1}^d \sum_{j=i+1}^d \frac{\cos^2(x_j/2)(\sin(x_i/2)-\sin(x_k/2)) - \cos^2(x_i/2)(\sin(x_j/2)-\sin(x_k/2))}{Y}\\
    &+\sum_{k=1}^d \sum_{i=k+1}^d \sum_{j=i+1}^d \frac{\cos^2(x_k/2)(\sin(x_j/2)-\sin(x_i/2))}{Y},
\end{align*}
and 
\begin{align*}
    T_4^\epsilon := \epsilon^2 \sum_{k=1}^d \sum_{i=k+1}^d \sum_{j=i+1}^d \frac{\cos^2(x_j/2)-\cos^2(x_i/2)+ \cos^2(x_k/2)}{Y},
\end{align*}
and $$Y:= (\sin(x_i/2)-\sin(x_k/2)+\epsilon^2)(\sin(x_j/2)-\sin(x_k/2)+\epsilon^2)(\sin(x_j/2)-\sin(x_i/2)+\epsilon^2).$$
Let $X_i := \sin(x_i/2)$, $X_j := \sin(x_j/2)$ and $X_k := \sin(x_k/2)$. Then the numerator of $T_3^\epsilon$ becomes 
$$ (1-X_j^2)(X_i-X_k)-(1-X_i^2)(X_j-X_k) + (1-X_k^2)(X_j-X_i) = - (X_i-X_k)(X_j-X_k)(X_j-X_i).$$
Thus $T_3^\epsilon$ reduces to 
\begin{align*}
    T_3^\epsilon =  -\sum_{k=1}^d \sum_{i=k+1}^d \sum_{j=i+1}^d \frac{(X_i-X_k)(X_j-X_k)(X_j-X_i)}{(X_i-X_k+\epsilon^2)(X_j-X_k+\epsilon^2)(X_j-X_i+\epsilon^2)}.
\end{align*}
Using equations \eqref{3.4} and \eqref{3.5} along with the simplifications \eqref{3.6}-\eqref{3.8} and non-negativity of the last term in \eqref{3.4} we obtain
\begin{align*}
    W_\epsilon(x) := \text{div}F_\epsilon - |F_\epsilon|^2 \geq &d\alpha \omega_\epsilon^{-1} -\Big(2\alpha+4\alpha^2\Big)\omega \omega_\epsilon^{-2}
    + \Big(2\alpha+4\alpha^2\Big)\omega_\epsilon^{-2}\sum_j \sin^4(x_j/2) \\
    &-2\alpha\omega\omega_\epsilon^{-1} + T_2^\epsilon(x) + T_1^\epsilon(x) - \frac{1}{2} T_3^\epsilon(x)- \frac{1}{2}T_4^\epsilon(x).
\end{align*}
Using this in \eqref{3.3} we get
\begin{equation}\label{3.9}
\begin{split}
    \int_{\Omega_{\T}} |\nabla \psi|^2 dx \geq \int_{\Omega_{\T}} W_\epsilon &|\psi|^2 dx \geq  d\alpha \int_{\Omega_{\T}} \omega_\epsilon^{-1} |\psi|^2 dx   -\Big(2\alpha+4\alpha^2\Big) \int_{\Omega_{\T}} \omega \omega_\epsilon^{-2} |\psi|^2 dx
    \\
    &+ \Big(2\alpha+4\alpha^2\Big)\int_{\Omega_{\T}}\omega_\epsilon^{-2}\sum_j \sin^4(x_j/2) |\psi|^2 dx\\
    &-2\alpha\int_{\Omega_{\T}}\omega\omega_\epsilon^{-1} |\psi|^2 dx + \int_{\Omega_{\T}} T_2^\epsilon(x) |\psi|^2 dx + \int_{\Omega_{\T}} T_1^\epsilon(x) |\psi|^2 dx  \\
    &- \frac{1}{2} \int_{\Omega_{\T}} T_3^\epsilon(x) |\psi|^2 dx - \frac{1}{2}\int_{\Omega_{\T}} T_4^\epsilon(x) |\psi|^2 dx.
\end{split}
\end{equation}
Next we compute the limits of the terms in the RHS of \eqref{3.9} as $\epsilon \rightarrow 0$. It can be checked that in all terms except the last one (containing $T_4^\epsilon$), limits are obtained by simply putting $\epsilon=0$. This is a consequence of $\psi(0) =0 $ and the dominated convergence theorem. Now we simplify the last term in \eqref{3.9}. To that end, we use the identity $b^2-a^2+c^2 = (b+c)(b+a)+(c-a)(c-b) +(c-a)(a+b) - (ab+bc+ ca)$ with $a = \cos(x_i/2), b = \cos(x_j/2), c = \cos(x_k/2)$. With this identity the summand in $T_4^\epsilon$ becomes
\begin{align*}
    \frac{\epsilon^2 (b+c)(b+a)}{Y} + \frac{\epsilon^2 (c-a)(c-b)}{Y} + \frac{\epsilon^2 (c-a)(a+b)}{Y} - \frac{\epsilon^2 (ab+bc+ca)}{Y}.
\end{align*}
Using the above expression of $T_4^\epsilon$ along with $\epsilon^2(ab+bc+ca)/Y \geq 0$, we obtain
\begin{equation}\label{3.10}
\begin{split}
    -1/2\int_{\Omega_{\T}} T_4^\epsilon |\psi|^2 dx
    &\geq -1/2 \sum_{k=1}^d \sum_{i=k+1}^d \sum_{j=i+1}^d \Bigg(\int_{\Omega_{\T}} \frac{\epsilon^2 (b+c)(b+a)}{Y}|\psi|^2 dx \\
    &+ \int_{\Omega_{\T}} \frac{\epsilon^2 (c-a)(c-b)}{Y}|\psi|^2 dx + \int_{\Omega_{\T}} \frac{\epsilon^2 (c-a)(a+b)}{Y}|\psi|^2 dx\Bigg).
\end{split}
\end{equation}
Next, we prove that all the integrals in the RHS of \eqref{3.10} go to $0$ as $\epsilon \rightarrow 0$. All the integrands converge pointwise to zero, so it remains to show that all of them can be controlled by an $L^1$ function (the claim then follows from the dominated convergence theorem). Let us begin with the first integral. 
\begin{align*}
    \frac{\epsilon^2 |b+c||b+a|}{Y}|\psi|^2 &\leq \frac{|\cos(x_j/2) + \cos(x_k/2)||\cos(x_i/2) + \cos(x_j/2)|}{|\sin(x_j/2)-\sin(x_k/2)||\sin(x_j/2)-\sin(x_i/2)|} |\psi|^2 \\
    &= \frac{|\cos((x_j-x_k)/4)||\cos((x_i-x_j)/4)| }{|\sin((x_j-x_k)/4)||\sin((x_j-x_i)/4)|} |\psi|^2\\
    & \leq c^2  \frac{|\cos((x_j-x_k)/4)||\cos((x_i-x_j)/4)||x_j-x_k||x_j-x_i|}{|\sin((x_j-x_k)/4)||\sin((x_j-x_i)/4)|}\\
    &\leq 4\pi^2 c^2  |\cos((x_j-x_k)/4)| |\cos((x_i-x_j)/4)| \in L^1.
\end{align*}
In the above estimates we first used $|\psi(x)| \leq c|x_i-x_j|$ for $i \neq j$. This is obtained by applying Taylor's theorem on $\psi(x)$ around the points on the plane $x_i=x_j$ in the direction orthogonal to it, and then using the fact that $\psi$ vanishes on $x_i=x_j$ and its gradient can be uniformly controlled by a constant $c$. Secondly, we used the estimate $\sin x \geq \frac{2}{\pi} x$ for $ 0 \leq x \leq \pi/2$. 

Using the same ideas we bound the last integrand in \eqref{3.10}:
\begin{align*}
    \frac{\epsilon^2 |c-a||a+b|}{Y}|\psi|^2 \leq &c^2  \frac{|\sin((x_i+x_k)/4)||\cos((x_i-x_j)/4)||x_i-x_k||x_i-x_j|}{|\cos((x_i+x_k)/4)||\sin((x_i-x_j)/4)|} \\
    & \leq 2\pi c^2 |\sin((x_i+x_k)/4)||\cos((x_i-x_j)/4)| \frac{|x_i-x_k|}{|\cos((x_i+x_k)/4)|}\\
    & \leq 4\pi^2 c^2  |\sin((x_i+x_k)/4)||\cos((x_i-x_j)/4)| \in L^1.
\end{align*}
In the last step, we used $|x_i-x_k|+|x_i+x_k| < 2\pi$ (it is a consequence of $-\pi < x_k < x_i< \pi$) and $\cos x \geq (1-\frac{2}{\pi}|x|)$ for $ x \in (-\pi/2, \pi/2)$. 

Similarly, it can be shown that the middle integrand in the RHS of \eqref{3.10} is bounded from above by an $L^1$ function and thereby the middle integral goes to zero in the limit $\epsilon \rightarrow 0$. Using the above computed limits in \eqref{3.9}, we obtain
\begin{align*}
    \int_{\Omega_{\T}}|\nabla \psi|^2 \geq &\big(-4\alpha^2 + (d^2-2)\alpha\big) \int_{\Omega_{\T}} \frac{|\psi|^2}{\omega(x)} dx + \Big(\frac{d(d-1)(2d-1)}{24}-2\alpha\Big) \int_{\Omega_{\T}}|\psi|^2 dx\\
    &  - \frac{2\alpha}{\omega} \int_{\Omega_{\T}}\sum_{k=1}^d \sum_{i=1}^{k-1}\big(\sin^2(x_i/2)+\sin^2(x_k/2)+ \sin(x_i/2)\sin(x_k/2) \big) |\psi|^2 dx\\
    &+ (2\alpha+4\alpha^2) \int_{\Omega_{\T}}\frac{\sum_{i} \sin^4(x_i/2)}{\omega^2} |\psi|^2 dx.
\end{align*}
Using the swapping rule and H\"older's inequality we get
\begin{align*}
    2\sum_{k=1}^d \sum_{i=1}^{k-1}& \sin^2(x_i/2)+\sin^2(x_k/2)+ \sin(x_i/2)\sin(x_k/2) \\
    &= \sum_{k=1}^d \sum_{i=1}^d \sin^2(x_i/2) + \sin^2(x_k/2) + \sin(x_i/2)\sin(x_k/2) - 3 \omega(x)\\
    &= (2d-3)\omega + \Big(\sum_{i=1}^d \sin(x_i/2)\Big)^2 \leq (3d-3) \omega.
\end{align*}
Using the above estimate along with $d \sum_{i} \sin^4(x_i/2) \geq \omega^2$, we get
\begin{align*}
    \int_{\Omega_{\T}} |\nabla \psi|^2 dx \geq& \big(-4\alpha^2+(d^2-2)\alpha) \int_{\Omega_{\T}} \frac{|\psi|^2}{\omega(x)} dx \\
    &+\Big(\frac{d(d-1)(2d-1)}{24}
    + \frac{4\alpha^2 - \alpha(3d^2-d-2)}{d}\Big) \int_{\Omega_{\T}} |\psi|^2 dx,
\end{align*}
under the assumption $\alpha \geq 0$. Taking $\alpha = (d^2-2)/8 \geq 0$ (this choice maximizes the coefficient of the first integral), we obtain
\begin{equation}\label{3.11}
    \int_{\Omega_{\T}}|\nabla \psi|^2 dx \geq \frac{(d^2-2)^2}{16} \int_{\Omega_{\T}} \frac{|\psi|^2}{\omega(x)} dx - \frac{11 d^4 - 38d^2 + 12d + 12}{48 d} \int_{\Omega_{\T}}|\psi(x)|^2 dx.
\end{equation}
Inequality \eqref{3.11} along with the Poincar\'e-Friedrichs inequality \eqref{3.1} completes the proof. 

\end{proof}
Next we prove that the constant in Theorem \ref{thm1.5} is asymptotically sharp.

\begin{proof}[Proof of Theorem \ref{thm1.7}]
Let $a: \Z^d \rightarrow \C$ be an antisymmetric function taking non-zero value only when $|n|^2 = C_P^{as}(d)$, see the proof of Lemma \ref{3.1} for an exact definition. We define $\psi (x) := (2\pi)^{-d/2} \sum_{n \in \Z^d} a(n) e^{-i n \cdot x}$, for $x \in Q_d$.  \\

Using $\omega(x) = \sum_j \sin^2(x_j/2) \leq d$ we obtain
$$ \int_{Q_d} |\nabla \psi|^2 / \int_{Q_d} |\psi|^2 \omega^{-1} \leq d \int |\nabla \psi|^2/ \int |\psi|^2 = d \sum_{n} |a(n)|^2 |n|^2/ \sum_n |a(n)|^2  = d C_P^{as}(d).$$
This proves $C_{\T}^{as}(d) \leq d C_P^{as}(d)$. On the other hand, Theorem \ref{thm1.5} gives a lower bound on $C_{\T}^{as}(d) \geq \frac{(d^2-2)^2}{16}(1+c_d/C_P^{as}(d))^{-1}$, where $c_d$ is as defined in \eqref{1.6}. Therefore $C_{\T}^{as}(d) \sim d^4$ as $d \rightarrow \infty$, since both upper and lower bounds grow as $d^4$ as $d \rightarrow \infty$.

\end{proof}

\section{Antisymmetric Hardy inequality on the integer lattice}\label{sec:discrete}
In this section we prove Theorems \ref{thm1.1} and \ref{thm1.3}. We start with a lemma which converts the discrete Hardy inequality into the Hardy inequality on the torus. 

\begin{lemma}\label{lem4.1}
Let $u\in C_c(\Z^d)$ be an antisymmetric function. Then there exists an antisymmetric function $\psi \in C^\infty(\overline{Q_d})$, all of whose derivatives are $2\pi$-periodic in each variable and which has zero average, such that
\begin{equation}\label{4.1}
    \sum_{n \in \Z^d} \frac{|u(n)|^2}{|n|^{2}} = \int_{Q_d} |\nabla \psi(x)|^2 dx,
\end{equation}
and 
\begin{equation}\label{4.2}
    \sum_{n \in \Z^d} |Du(n)|^2 = 4\int_{Q_d} |\Delta \psi(x)|^2 \omega(x)dx,
\end{equation}
where 
\begin{equation}\label{4.3}
    \omega(x) := \sum_{j=1}^d \sin^2(x_j/2).
\end{equation}
\end{lemma}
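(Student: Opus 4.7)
The plan is to use Fourier series to transport the discrete quantities on $\Z^d$ to integrals on $Q_d$. First I would note that antisymmetry of $u$ forces $u(n)=0$ whenever two coordinates of $n$ coincide; in particular $u(0)=0$. This lets me define
\[ \psi(x) := (2\pi)^{-d/2} \sum_{n \in \Z^d \setminus \{0\}} \frac{u(n)}{|n|^2}\, e^{-in\cdot x}, \]
which is a \emph{finite} sum (since $u$ has finite support) and hence a trigonometric polynomial. The regularity, $2\pi$-periodicity of all derivatives, and zero average (the $n=0$ coefficient vanishes) are then immediate; antisymmetry of $\psi$ follows because the coefficients $u(n)/|n|^2$ are antisymmetric in $n$, and swapping $x_i \leftrightarrow x_j$ inside $e^{-in\cdot x}$ corresponds to swapping $n_i \leftrightarrow n_j$ in the summation.

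For identity \eqref{4.1}, I would apply Parseval/Plancherel on $L^2(Q_d)$: each differentiation $\partial_{x_k}$ multiplies the Fourier coefficient by $-in_k$, so
\[ \int_{Q_d} |\nabla \psi(x)|^2\, dx = \sum_{n \neq 0} |n|^2 \left|\frac{u(n)}{|n|^2}\right|^2 = \sum_{n \neq 0} \frac{|u(n)|^2}{|n|^2} = \sum_{n \in \Z^d} \frac{|u(n)|^2}{|n|^2}, \]
where the last equality uses $u(0)=0$.

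For \eqref{4.2}, the key structural observation is that $\Delta \psi = -\widetilde{u}$, where $\widetilde{u}(x) := (2\pi)^{-d/2} \sum_n u(n)\, e^{-in\cdot x}$ is the Fourier series of $u$ itself: the operator $-\Delta$ cancels the $1/|n|^2$ in the coefficients of $\psi$. Then I would verify the basic translation identity $\widetilde{D_j u}(x) = (1-e^{-ix_j})\, \widetilde{u}(x)$ by a one-line shift of summation index. Since $|1-e^{-ix_j}|^2 = 4\sin^2(x_j/2)$, applying Parseval to each $D_j u$ and summing in $j$ gives
\[ \sum_{n \in \Z^d} |Du(n)|^2 = \sum_{j=1}^d \int_{Q_d} |1-e^{-ix_j}|^2\, |\widetilde{u}(x)|^2\, dx = 4 \int_{Q_d} \omega(x)\, |\Delta \psi(x)|^2\, dx, \]
exactly as required.

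I do not anticipate any substantial obstacle: the argument is a bookkeeping exercise in Parseval's identity. The one point that deserves care is the identification $-\Delta\psi = \widetilde{u}$, since this is the reason the weighted $L^2(\omega\,dx)$ norm of $\Delta\psi$ matches the discrete $\ell^2$ norm of $Du$ via the symbol $|1-e^{-ix_j}|^2 = 4\sin^2(x_j/2)$; once this is in hand, the two identities fall out mechanically, and no convergence issues arise because $u$ has finite support.
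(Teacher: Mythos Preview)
Your proposal is correct and follows the same Fourier/Parseval strategy as the paper. The only difference is presentational: the paper first introduces the auxiliary sequences $u_j(n)=n_j|n|^{-2}u(n)$, shows that $(\widehat{u_1},\dots,\widehat{u_d})$ is curl-free, and then invokes the existence of a potential $\psi$ with $\partial_{x_j}\psi=\widehat{u_j}$ (finding $a(n)=i|n|^{-2}u(n)$ a posteriori), whereas you simply write down $\psi$ explicitly from the outset and read off $-\Delta\psi=\widetilde u$; your shortcut bypasses the gradient-field detour but the underlying identities are identical.
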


\begin{proof}
Let $u \in \ell^2(\Z^d)$. Its Fourier transform $\widehat{u} \in L^2((-\pi, \pi)^d)$ is given by  
\begin{align*}
    \widehat{u}(x) := (2\pi)^{-\frac{d}{2}} \sum_{n \in \Z^d} u(n) e^{-i n \cdot x}, \hspace{9pt} x \in (-\pi, \pi)^d.
\end{align*}

Let $u_j(n) := \frac{n_j}{|n|^2} u(n) $ for $n \neq 0$ and $u_j(0) =0$. Then, Parseval's identity gives
\begin{equation}\label{4.4}
    \sum_{n \in \Z^d}\frac{|u(n)|^2}{|n|^2} = \sum_{j=1}^d \sum_{n \in \Z
    ^d}| u_j|^2 = \int_{Q_d} \sum_{j=1}^d |\widehat{u_j}|^2 dx.
\end{equation}
On the other hand, applying Parseval's identity for $D_ju$ and summing with respect to $j$ we obtain
\begin{equation}\label{4.5}
    \begin{split}
        \sum_{n \in \Z^d} \sum_{j=1}^d |D_ju(n)|^2 =
        \sum_{j=1}^d \int_{Q_d} |\widehat{D_ju}|^2 dx &=4\int_{Q_d} |\widehat{u}(x)|^2 \sum_{j=1}^d  \sin^2(x_j/2)\\
        &= 4\int_{Q_d} \Big|\sum_{j=1}^d \partial_{x_j}\widehat{u_j}(x)\Big|^2 \sum_{j=1}^d  \sin^2(x_j/2) dx,    
    \end{split}
\end{equation}
where we first use $\widehat{D_ju}(x) = (1-e^{-ix_j})\widehat{u}(x)$ (direct consequence of inversion formula for Fourier transform), and $\sum_j \partial_{x_j}\widehat{u_j}(x) = -i \widehat{u}$.\\

Next we show that $(\widehat{u_1}, \dots, \widehat{u_d})$ is a gradient vector field. To that end using the inversion formula along with integration by parts gives
\begin{align*}
    (2\pi)^{\frac{d}{2}}n_k u_j(n) = \int_{Q_d} n_k \widehat{u_j}(x) e^{i n \cdot x} = i \int_{Q_d}  \partial_{x_k} \widehat{u_j}(x) e^{i n \cdot x}, 
\end{align*}
which implies that
$$\partial_{x_k}\widehat{u_j} = \partial_{x_j}\widehat{u_k}.$$

This further implies that there exists a smooth function $\psi$ such that $\widehat{u_j}(x) = \partial_{x_j} \psi(x)$, whose average is zero. It is easy to see that periodicity of $\widehat{u_j}$ along with its zero average implies that $\psi$ is also $2\pi$-periodic in each variable. Furthermore, we claim that $\psi$ satisfies the antisymmetric condition \eqref{1.2}: writing the Fourier expansion of $\widehat{u}$ and $\psi$, we get
\begin{align*}
    \widehat{u_j} = (2\pi)^{-\frac{d}{2}}\sum_{n} \frac{n_j}{|n|^2} u(n) e^{-i n \cdot x} = (2\pi)^{-\frac{d}{2}}\sum_{n} -i n_j a(n) e^{-in \cdot x} = \partial_{x_j} \psi,
\end{align*}
where $\psi = (2\pi)^{-\frac{d}{2}} \sum_n a(n) e^{-i n \cdot x}$. This implies that $$ a(n) = i |n|^{-2} u(n).$$
Antisymmetry of $u$ implies that $a(n)$ is antisymmetric, which further implies that $\psi$ is antisymmetric. Finally, the result follows by setting $\widehat{u_j} = \partial_{x_j}\psi$ in equations \eqref{4.4} and \eqref{4.5}.
\end{proof}

\begin{proof}[Proof of Theorem \ref{thm1.1}]
Using integration by parts along with H\"older's inequality we obtain
$$ \int_{Q_d} |\nabla \psi|^2 dx \leq \Big(\int_{Q_d}|\Delta \psi|^2 w(x) dx\Big)^{1/2} \Big(\int_{Q_d}\frac{|\psi|^2}{\omega(x)} dx\Big)^{1/2}.$$
Using Theorem \ref{thm1.5} (antisymmetric Hardy inequality on the torus) along with Lemma \ref{lem4.1} completes the proof. 
\end{proof}

\begin{proof}[Proof of Theorem \ref{thm1.3}]
Let $u(n)$ be an antisymmetric function such that $u(n) = 0$ if $|n|^2 \neq C_P^{as}(d)$, where $C_P^{as}(d)$ is given by \eqref{3.2} (see the proof of Lemma \ref{lem3.1} for the exact definition of $u$). Then we have
$$ \sum_{n \in \Z^d} |Du(n)|^2/ \sum_{n \in \Z^d}|u(n)|^2 |n|^{-2} = 2d C_P^{as}(d),$$
which implies that $C_L^{as}(d) \leq 2d C_P^{as}(d) \sim d^4$ as $d \rightarrow \infty$. This along with Theorem \ref{thm1.1} completes the proof.
\end{proof}

\textbf{Acknowledgements.}
We would like to thank Ari Laptev for several useful discussions and Ashvni Narayanan for careful proof reading. The author is funded by President's Ph.D. scholarship, Imperial College London. \\


\end{document}